\newcommand\abs[1]{\lvert #1\rvert}
\newenvironment{subproof}[1][\proofname]{%
  \begin{proof}[#1]%
}{%
  \end{proof}%
}
\def\COMMENT#1{}
\let\COMMENT=\footnote%
\newtheorem{theorem}{Theorem}[section]
\newtheorem{lemma}[theorem]{Lemma}
\newtheorem{proposition}[theorem]{Proposition}
\newtheorem{corollary}[theorem]{Corollary}
\newtheorem{conjecture}{Conjecture}[section]
\newtheorem{problem}[conjecture]{Problem}
\newtheorem{claim}{Claim}
\newtheorem*{definition*}{Definition}
\newcommand\pivot\wedge
\renewcommand{\epsilon}{\varepsilon}
\begin{document}

\title{The Erd{\H{o}}s-Hajnal 
property for graphs with no fixed cycle as a pivot-minor}

\author[J. Kim]{Jaehoon Kim}
\address[J. Kim]{Department of Mathematical Sciences,  KAIST, Daejeon, South Korea}
\email{jaehoon.kim@kaist.ac.kr}
\author[S. Oum]{Sang-il Oum}
\address[S. Oum]{Discrete Mathematics Group, Institute for Basic Science (IBS), Daejeon, South Korea}
\address[S. Oum]{Department of Mathematical Sciences,  KAIST, Daejeon, South Korea}
\email{sangil@ibs.re.kr}
\thanks{J.K.\ was supported by the POSCO Science Fellowship of POSCO TJ Park Foundation and by the KAIX Challenge program of KAIST Advanced Institute for Science-X (J.~Kim). S.O.\ was supported by the Institute for Basic Science (IBS-R029-C1).}

\begin{abstract}
    We prove that for every integer $k$, there exists $\varepsilon > 0$ such that for every n-vertex graph $G$ with no pivot-minor isomorphic to $C_k$, there exist disjoint sets $A,B \subseteq V(G)$ such that $|A|,|B| \geq \varepsilon n$, and $A$ is either complete or anticomplete to $B$.
This proves the analog of the Erd\H{o}s-Hajnal conjecture for 
the class of graphs with no pivot-minor isomorphic to $C_k$.
\end{abstract}

\date{\today}
\maketitle

\section{Introduction}
In this paper all graphs are simple, having no loops and no parallel edges.
For a graph~$G$, let $\omega(G)$ be the maximum size of a clique, that is 
a set of pairwise adjacent vertices
and let $\alpha(G)$ be the maximum size of an independent set, that is 
a set of pairwise non-adjacent vertices.
Erd\H{o}s and Hajnal~\cite{EH1989} proposed the following conjecture in 1989.
\begin{conjecture}[Erd\H{o}s and Hajnal~\cite{EH1989}]\label{con:eh}
For every graph $H$, 
there is $\varepsilon>0$ such that all graphs $G$ with no induced subgraph isomorphic to $H$
satisfies 
\[\max(\alpha(G),\omega(G))\ge \abs{V(G)}^\varepsilon.\]
\end{conjecture}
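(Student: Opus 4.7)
The plan is to attack Conjecture~\ref{con:eh} by a two-stage reduction: first to the case of \emph{prime} $H$ (those with no nontrivial module), and then to a strengthened ``strong Erd\H{o}s-Hajnal property'' for such $H$. For the first stage, I would invoke the Alon-Pach-Solymosi substitution lemma: if Conjecture~\ref{con:eh} holds for two graphs $H_1$ and $H_2$, then it holds for any graph obtained by substituting a vertex of $H_1$ by (a copy of) $H_2$. Induction on $|V(H)|$ together with the modular decomposition of $H$ then reduces the conjecture to the case where $H$ is prime.

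For prime $H$, my target would be the stronger statement that every $n$-vertex $H$-free graph contains disjoint sets $A,B$ with $|A|,|B|\geq \varepsilon n$ and $A$ complete or anticomplete to $B$. This strong form implies Conjecture~\ref{con:eh} by a simple recursion: one recursively applies the property inside each of $A$ and $B$, doubling either the clique or the independent set at every step, to produce a clique or independent set of size $n^{\varepsilon'}$ for some $\varepsilon'>0$ depending on $\varepsilon$.

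To produce the pair $(A,B)$ for a prime $H$, I would combine two ingredients. First, a regularity partition of $G$ together with a Ramsey-type embedding argument to show that any sufficiently regular pair of middling density contains an induced copy of $H$; this forces the regular pairs of $G$ to be very biased, and after cleaning one obtains a nearly complete or nearly empty bipartite pair of linear size. Second, a local absorption or dependent-random-choice argument to upgrade this ``almost'' complete/anticomplete pair to an \emph{exactly} complete or anticomplete pair while losing only a constant factor in the sizes.

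The main obstacle is the first ingredient for generic prime $H$. Standard Ramsey-type embedding between regular pairs produces an induced copy of $H$ only under fairly delicate density and structure hypotheses, and regularity-based methods typically yield pairs of size $n/\mathrm{polylog}(n)$ rather than $\varepsilon n$; closing this gap in full generality is precisely what is not known. I therefore expect a plan along these lines to succeed only for specific small prime $H$ (as in the existing literature for $H=P_4$, the bull, or $P_5$), and the general prime case would require a genuinely new structural theorem for $H$-free graphs or a new extremal technique beyond Szemer\'edi regularity.
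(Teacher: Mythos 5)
The statement you were asked to prove is Conjecture~\ref{con:eh}, the Erd\H{o}s--Hajnal conjecture itself. The paper does not prove it --- it explicitly says ``This conjecture still remains open'' immediately after stating it --- and indeed no proof is known to anyone. So there is no ``paper's own proof'' to compare against, and there is nothing you could have written that would make this a valid proof.

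Your proposal is honest about this: you correctly sketch the standard reduction to prime $H$ via the Alon--Pach--Solymosi substitution lemma, correctly note that the strong Erd\H{o}s--Hajnal property (a linear-size pure pair) would imply the conjecture by the doubling recursion, and then correctly identify the fatal gap --- no one knows how to produce a linear-size pure pair (or even to beat $\mathrm{polylog}$) for a general prime $H$-free graph, and regularity-based embedding does not close that gap. So the write-up is a reasonable survey of the strategy landscape, but it is not, and does not claim to be, a proof; the concluding paragraph is an admission that the key step is missing. One substantive caution worth flagging: the strong Erd\H{o}s--Hajnal property is known to \emph{fail} for several prime graphs $H$ (e.g.\ for $H=C_5$, since triangle-free graphs can lack linear pure pairs, as the paper itself notes via~\cite{FP2008a}), so your second-stage target is provably too strong for prime $H$ in general --- any successful attack must aim at something weaker than a linear pure pair. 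What this paper actually does is quite different in scope: it proves the \emph{pivot-minor} analogue (Conjecture~\ref{con:pivot}) for the single family $H=C_k$, which is a strictly weaker containment relation, and there the strong property does hold (Theorem~\ref{thm: Strong EH}); the induced-subgraph conjecture remains untouched.
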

This conjecture still remains open. 
See~\cite{Chudnovsky2013} for a survey on this conjecture.
We can ask the same question for weaker containment relations. 
Recently Chudnovsky and Oum~\cite{CO2018} proved that this conjecture holds if we replace ``induced subgraphs'' with ``vertex-minors'' as follows. This is weaker 
in the sense that every induced subgraph $G$ is a vertex-minor of $G$
but not every vertex-minor of $G$ is an induced subgraph of $G$.
\begin{theorem}[Chudnovsky and Oum~\cite{CO2018}]\label{thm:vertexminor}
For every graph $H$, there exists $\varepsilon>0$ such that 
every graph with no vertex-minors isomorphic to $H$ satisfies 
\[\max(\alpha(G),\omega(G))\ge \abs{V(G)}^\varepsilon.\]
\end{theorem}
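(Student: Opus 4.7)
The plan is to combine a Ramsey-type structural result for vertex-minors with a strong Erd\H{o}s-Hajnal-type property for structurally restricted graphs.

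First, I would seek to establish a ``vertex-minor Ramsey'' theorem: for every graph $H$ there exists $r(H)$ such that every graph of rank-width at least $r(H)$ contains $H$ as a vertex-minor. Equivalently, any graph that excludes $H$ as a vertex-minor has rank-width bounded by a function of $\abs{V(H)}$. This reduces the problem to proving the desired conclusion for graphs of bounded rank-width.

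Second, I would prove the \emph{strong} Erd\H{o}s-Hajnal property for graphs of bounded rank-width: every $n$-vertex graph $G$ of rank-width at most $r$ admits disjoint sets $A, B$ with $\abs{A}, \abs{B} \ge \eps(r) n$ such that $A$ is either complete or anticomplete to $B$. The argument would go via a balanced rank-decomposition: at some edge of the decomposition the vertex set splits into parts of comparable size whose bipartite adjacency matrix has $GF(2)$-rank at most $r$. A standard counting lemma for low-rank bipartite graphs then yields large constant-pattern subsets. Recursing inside each side and making a canonical choice (complete vs.\ anticomplete) at each step produces a clique or independent set of size $n^{\eps(r)}$.

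The main obstacle is the first step: forcing any fixed $H$ to appear as a vertex-minor once rank-width is large. Rank-width is by definition a measure of the complexity of cut-rank functions, but connecting it to the presence of a specific $H$ requires extracting a concrete structured object inside a decomposition. The intended route is to first extract, from large rank-width, a canonical vertex-minor that is ``universal'' for $H$ of bounded size, such as a long induced path, a large matching with prescribed attachments, or a sufficiently large bipartite gadget, and then to realize $H$ itself by further local complementations and deletions. Controlling the combined effect of local complementation and vertex deletion on adjacency in a decomposition, and making sure the structured vertex-minor is genuinely forced (rather than only present after suitable vertex selection), is where the bulk of the technical difficulty lies.
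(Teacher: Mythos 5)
Your step 1 is false, and it is the load-bearing step, so the proposal does not give a proof of the stated theorem.

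The claimed ``vertex-minor Ramsey'' statement --- that for every graph $H$ there is $r(H)$ so that every graph of rank-width at least $r(H)$ contains $H$ as a vertex-minor, equivalently that excluding $H$ as a vertex-minor bounds the rank-width --- does \emph{not} hold for general $H$. By a theorem of Bouchet, the class of circle graphs is closed under vertex-minors and is characterized by the three excluded vertex-minors $W_5$, $W_7$, and $BW_3$. Circle graphs have unbounded rank-width (they have unbounded clique-width; for instance, permutation graphs, a subclass, already do). Hence for any non-circle graph $H$ (in particular $H=W_5$), the circle graphs form a hereditary, vertex-minor--closed class excluding $H$ with arbitrarily large rank-width, refuting the equivalence. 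The correct statement, namely that excluding a fixed $H$ as a vertex-minor bounds the rank-width, holds exactly when $H$ is a circle graph; this is the grid theorem for vertex-minors of Geelen, Kwon, McCarty, and Wollan \cite{GKMW2019}, a deep theorem that postdates \cite{CO2018} and, as just explained, cannot be extended to arbitrary $H$. So there is no way to repair step 1 as stated, and the reduction of the problem to bounded rank-width collapses.

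Your step 2 --- that graphs of bounded rank-width have the strong Erd\H{o}s--Hajnal property via a balanced edge of a rank-decomposition and the bounded number of distinct rows of a low $GF(2)$-rank bipartite adjacency matrix --- is correct and standard, and indeed the paper remarks in Section~\ref{sec:discussion} that this fact is noted in \cite{CO2018}. But the proof of \cite{CO2018} does not proceed by first bounding rank-width; it argues directly that a graph with no linear-size pure pair must contain any fixed $H$ as a vertex-minor (roughly, by exhibiting a large prime piece and then extracting $H$ from it), thereby avoiding the false reduction your step 1 requires.
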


We ask whether Conjecture~\ref{con:eh} holds if we replace ``induced subgraphs'' 
with ``pivot-minors'' as follows.
\begin{conjecture}\label{con:pivot}
For every graph $H$, there exists $\varepsilon>0$ such that 
every graph $G$ with no pivot-minor isomorphic to $H$
satisfies 
\[\max(\alpha(G),\omega(G))\ge \abs{V(G)}^\varepsilon.\]
\end{conjecture}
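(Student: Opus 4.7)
The plan is to reduce general $H$ to the case $H = C_k$ which is the main theorem of this paper, via a case analysis on the structure of $H$ combined with induction on $|V(H)|$.

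\emph{Trivial cases.} If $H$ has no edges, then $G$ having no pivot-minor isomorphic to $H$ implies, via vertex deletion alone, that $\alpha(G) < |V(H)|$, and Ramsey's theorem yields $\omega(G) \ge n^{1/(|V(H)|-1)}$. Dually, if $H$ is complete, $\alpha(G)$ is forced to be large. So henceforth assume that $H$ has at least one edge and at least one non-edge.

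\emph{Non-bipartite $H$.} Pivot-minors of bipartite graphs are bipartite, so whenever $H$ is non-bipartite, every bipartite graph is automatically $H$-pivot-minor-free, and bipartite $n$-vertex graphs satisfy $\max(\alpha,\omega) \ge \sqrt{n/2}$ trivially. For non-bipartite $G$ with no pivot-minor isomorphic to $H$, the hope is to use an odd cycle of $G$ together with pivoting to realize $H$ as a pivot-minor (contradicting the hypothesis) unless $G$ has strong structural restrictions; such restrictions (e.g.\ bounded rank-width) would yield $\max(\alpha,\omega) \ge n^\varepsilon$ via existing structural results.

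\emph{Bipartite $H$, induction on $|V(H)|$.} This is the core case. Any bipartite $H$ embeds as a subgraph of some $K_{t,t}$, and $K_{t,t}$ contains $C_{2t}$ as an induced cycle and hence as a pivot-minor. One line of attack is to show that a sufficiently large $H$-pivot-minor-free graph must contain, after restricting to a large induced subgraph, no pivot-minor isomorphic to some fixed $C_k$, at which point the paper's main theorem applies. An alternative is to pick a vertex $v\in V(H)$ and locate, inside any $H$-pivot-minor-free $G$, a large induced subgraph (possibly after a sequence of pivots) that is $(H-v)$-pivot-minor-free, then complete the argument by induction on $|V(H)|$.

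The \textbf{main obstacle} is the final step. Pivot-minor exclusion behaves poorly under local modifications of $H$, because pivoting changes adjacencies globally; there is no obvious substitution lemma that would let us decompose an $H$-pivot-minor-free graph into pieces governed by smaller forbidden graphs. Nor can Theorem~\ref{thm:vertexminor} be invoked directly, since pivot-minor-free is a weaker containment condition than vertex-minor-free and therefore corresponds to a strictly larger class of graphs. Any successful proof will likely require a genuinely new technique adapted to pivot-minors, for instance one exploiting the $GF(2)$-linear-algebraic structure of adjacency matrices (via the cut-rank function), which is precisely the invariant preserved by pivoting.
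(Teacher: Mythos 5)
There is a fundamental problem: the statement you are proving is Conjecture~\ref{con:pivot}, which the paper explicitly leaves open. The paper proves only the special case $H=C_k$ (Theorem~\ref{thm: Strong EH} via Lemma~\ref{lem: main}), and the general conjecture has no known proof. So there is no paper proof to compare against, and your proposal must be judged on its own terms --- and on those terms it is not a proof but a list of attack routes, each of which stops at exactly the point where the real difficulty begins. You acknowledge this yourself in your final paragraph, which correctly identifies the obstacle (no substitution or decomposition lemma for pivot-minor exclusion) but does not overcome it.

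Concretely: the edgeless/complete cases are fine (an independent set or clique of size $\abs{V(H)}$ is an induced subgraph, hence a pivot-minor, so Ramsey applies). But the non-bipartite case rests on the unsupported hope that a non-bipartite $H$-pivot-minor-free graph either contains $H$ as a pivot-minor or has bounded rank-width; nothing like this is known, and there is no reason an odd cycle in $G$ should let you build an arbitrary non-bipartite $H$. In the bipartite case there is also a factual slip: $K_{t,t}$ does not contain $C_{2t}$ as an \emph{induced} cycle for $t\ge 3$ (its induced cycles have length at most $4$), and in any event excluding $H$ as a pivot-minor neither implies nor is implied by excluding $K_{t,t}$ or $C_{2t}$, so this reduction does not connect to Theorem~\ref{thm: Strong EH}. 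The proposed induction on $\abs{V(H)}$ via deleting a vertex $v$ is precisely the step that fails for pivot-minors, since pivoting acts globally and excluding $H$ gives no control over $H-v$. If you want a tractable target, the right move is to specialize to $H=C_k$ and follow the paper's actual machinery: R\"odl's theorem to pass to a sparse or co-sparse part, the dominating-tree and path-weight lemmas of Bonamy--Bousquet--Thomass\'e, and the constructions of $C_k$ as a pivot-minor from long holes, antiholes, and strongly $k$-good fans (Propositions~\ref{prop:cycle1}, \ref{prop:antihole}, and~\ref{prop:fan}).
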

The detailed definition of pivot-minors will be presented in Section~\ref{sec:cycle}.
For now, note that the analog for vertex-minors is weakest,
the analog for pivot-minors is weaker than that for induced subgraphs
but stronger than that for vertex-minors. 
This is because every induced subgraph of $G$ is a pivot-minor of $G$,
and every pivot-minor of $G$ is a vertex-minor of $G$.
In other words, Conjecture~\ref{con:eh} implies Conjecture~\ref{con:pivot}
and Conjecture~\ref{con:pivot} implies Theorem~\ref{thm:vertexminor}. We verify Conjecture~\ref{con:pivot} for  $H=C_k$, the cycle graph on $k$ vertices as follows.
\begin{theorem}\label{thm:eh}
For every $k\ge 3$, there exists $\varepsilon>0$ such that 
every graph with no pivot-minor isomorphic to $C_k$
satisfies 
\[\max(\alpha(G),\omega(G))\ge \abs{V(G)}^\varepsilon.\]
\end{theorem}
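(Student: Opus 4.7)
The plan is to prove the stronger statement from the abstract: every $n$-vertex graph $G$ with no pivot-minor isomorphic to $C_k$ contains disjoint vertex sets $A, B$ with $|A|,|B|\ge \varepsilon n$ and $A$ either complete or anticomplete to $B$ (a \emph{pure pair}). A standard iteration argument---repeatedly split the graph through pure pairs while tracking whether each split is complete or anticomplete, and invoke Ramsey-type bookkeeping at the leaves---then converts this into the polynomial lower bound on $\max(\alpha(G), \omega(G))$ demanded by Theorem~\ref{thm:eh}.

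The first ingredient is the observation that pivoting an edge of $C_m$ and deleting its endpoints yields $C_{m-2}$, so $C_k$ is a pivot-minor of $C_m$ whenever $m\ge k$ and $m\equiv k\pmod{2}$. Consequently $G$ has no induced cycle of length at least $k$ of the same parity as $k$. A mild extension---long induced paths carrying suitably placed chord attachments can also be pivoted into $C_k$---gives further structural restrictions on induced subgraphs of $G$ that will be used to rule out ``path-like'' configurations later in the proof.

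The core argument then reduces to a bipartite version. Applying a R\"odl-type density dichotomy (which is available in any class forbidding a fixed induced subgraph), one locates within $G$ an induced subgraph $G'$ on $\delta n$ vertices whose edge density is either below $\eta$ or above $1-\eta$, where $\eta$ is chosen small in terms of the eventual $\varepsilon$; the two regimes are handled analogously, so I describe the low-density case. Partition $V(G')$ into halves $X$ and $Y$. If the bipartite graph of edges between $X$ and $Y$ contains a pure pair of linear size, we are done. Otherwise, the $\mathrm{GF}(2)$-matrix interpretation of pivot-minors for bipartite graphs supplies enough structure to assemble a $C_{2k}$ pivot-minor, contradicting the assumption when $k$ is even; when $k$ is odd, a parallel argument uses an odd-cycle witness inside $G'$ (which must exist since the class is nontrivial for odd $k$) together with bipartite pivoting to produce a $C_k$ of the correct parity.

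The main obstacle will be the last step: producing a \emph{linear}-sized pure pair inside a bipartite graph that forbids $C_{2k}$ as a pivot-minor, rather than a merely polynomial-sized one. The delicate regime is intermediate density, where the bipartite graph is neither random-like nor highly structured; the plan there is to iterate $\mathrm{GF}(2)$-pivots on the bipartite adjacency matrix, using a potential function that strictly decreases at each step unless a large constant submatrix becomes visible, and to bound the total loss through the iteration by a constant depending only on $k$. This quantitative control---keeping $\varepsilon$ independent of $n$---is what distinguishes the strong pure-pair statement from the polynomial bound and should constitute the technical core of the proof.
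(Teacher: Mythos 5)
Your opening observations are sound: the reduction from the strong pure-pair statement to Theorem~\ref{thm:eh} is standard, R\"odl's theorem is indeed the right first step, and your remark that pivoting an edge of $C_m$ and deleting its endpoints yields $C_{m-2}$ (so $C_m$ with $m\ge k$, $m\equiv k \pmod 2$ has a $C_k$ pivot-minor) is precisely Proposition~\ref{prop:cycle1} of the paper. But from the point where you apply R\"odl's dichotomy onward, your plan diverges from the paper and, more importantly, never becomes a proof. The ``bipartite $\mathrm{GF}(2)$-matrix'' argument is described only as an aspiration: there is no actual lemma, no construction of a pivot-minor from a dense/sparse bipartite block, and no potential function is defined. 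You yourself flag the intermediate-density regime as ``the technical core'' and ``the main obstacle'' without resolving it, which is to say the hard part of the proof is absent. Additionally, your treatment of odd $k$ contains a false step: you assert that $G'$ must contain an odd cycle ``since the class is nontrivial for odd $k$.'' A sparse induced subgraph produced by R\"odl's theorem may perfectly well be bipartite (indeed bipartite graphs have no $C_k$ pivot-minor for $k$ odd), so no odd-cycle witness is guaranteed, and this route collapses.

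The paper takes a genuinely different and fully worked-out path after R\"odl. Rather than splitting into a bipartite graph, it works in a sparse (or complemented-to-sparse) connected piece $G'$ and invokes the dominating-tree lemma of Bonamy, Bousquet, and Thomass\'e (Lemma~\ref{lem: tree}), together with their weight-splitting lemma on rooted trees (Lemma~\ref{lem: tree path weight}), to reduce to the case where a dominating induced \emph{path} $P$ carries a constant fraction of the vertices. The dense case is dispatched by Proposition~\ref{prop:antihole}, which shows that long anti-holes have a $C_k$ pivot-minor via ``partial complements'' of cycles. In the sparse case, the key is a careful parity bookkeeping along $P$: vertices off the path are classified by the parity of their first/last attachment to an initial or terminal segment of $P$, and the two pivot-minor-producing gadgets are (i) Proposition~\ref{prop:cycle1} applied to induced cycles of the right parity, and (ii) Proposition~\ref{prop:fan}, which shows that a \emph{strongly $k$-good generalized fan} (a long induced path plus an apex with at least two attachments, one long initial interval and one final interval of odd length) always yields a $C_k$ pivot-minor. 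A counting argument over the sliding window $U_i^-, U_i^0, U_i^+$ then forces either such a gadget or a linear anticomplete pair. Both the fan gadget and the anti-hole reduction are essential to handle the parity obstruction that your sketch correctly identifies but does not overcome; without an analogue of these, the bipartite-matrix approach as stated does not close.
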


We actually prove a stronger property, as Chudnovsky and Oum~\cite{CO2018} did. 
Before stating this property, let us first state a few terminologies.
A class $\mathcal G$ of graphs closed under taking induced subgraphs is said to have \emph{the Erd\H{o}s-Hajnal property}
if there exists $\varepsilon>0$ such that every graph $G$ in $\mathcal{G}$
satisfies \[\max(\alpha(G),\omega(G))\ge \abs{V(G)}^\varepsilon.\]
A class $\mathcal G$ of graphs closed under taking induced subgraphs is said to 
have \emph{the strong Erd\H{o}s-Hajnal property} 
if there exists $\varepsilon>0$ such that 
every $n$-vertex graph in $\mathcal G$ with $n>1$ has disjoint sets $A$, $B$ of vertices such that $|A|,|B|\geq \varepsilon n$ and $A$ is either complete or anti-complete to $B$.
It is an easy exercise to show that 
the strong Erd\H{o}s-Hajnal property implies the Erd\H{o}s-Hajnal property,
see~\cite{APPRS2005,FP2008a}.

Chudnovsky and Oum~\cite{CO2018} proved that the class of graphs with no vertex-minors isomorphic to $H$ for a fixed graph $H$
has the strong Erd\H{o}s-Hajnal property, implying Theorem~\ref{thm:vertexminor}.
We propose its analog for pivot-minors as a conjecture, which implies the theorem of Chudnovsky and Oum~\cite{CO2018}. Note that this conjecture is not true if we replace the pivot-minor with induced graphs. For example, the class of triangle-free graphs does not have the strong Erd\H{o}s-Hajnal property~\cite{FP2008a}.
\begin{conjecture}\label{con:strongeh}
    For every graph $H$, there exists $\varepsilon>0$ such that 
    for all $n>1$, 
    every $n$-vertex graph with no pivot-minor isomorphic to $H$
    has two disjoint sets $A$, $B$ of vertices 
    such that
    $\abs{A},\abs{B}\ge \varepsilon n$
    and 
    $A$ is complete or anti-complete to $B$.
\end{conjecture}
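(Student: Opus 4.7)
The plan is to prove Conjecture~\ref{con:strongeh} by induction on $|V(H)|$, using Theorem~\ref{thm:eh} (the cycle case) as both a building block and a structural tool within the inductive step. Fix $H$ on $h$ vertices and assume the conjecture for all graphs on fewer than $h$ vertices; the goal is to produce $\varepsilon=\varepsilon(H)>0$ so that every $n$-vertex graph with no $H$-pivot-minor admits the desired complete/anti-complete splitting with parameter $\varepsilon$.

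After a few easy reductions (isolated or universal vertices of $H$ can be peeled off by Ramsey-type splitting arguments, and disconnected $H$ reduces to a single component), we may assume $H$ is connected. If $H=C_h$ the statement is Theorem~\ref{thm:eh}, so assume $H$ is not a cycle. Pick $v\in V(H)$ with $H':=H-v$, and let $\varepsilon'=\varepsilon'(H')>0$ be the constant given by the inductive hypothesis for $H'$. Let $G$ be an $n$-vertex graph with no $H$-pivot-minor. If $G$ has no $H'$-pivot-minor, the inductive hypothesis applied to $H'$ yields the splitting immediately, so we assume $G$ contains many realizations of $H'$ as pivot-minors, none of which extends to a pivot-minor isomorphic to $H$.

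The core of the argument is to convert this non-extendability into a splitting of linear size. For each realization of $H'$ obtained by a sequence of pivots and deletions landing on a set $S\subseteq V(G)$, the hypothesis forbids the existence of a vertex of $V(G)\setminus S$ attaching to (the pivoted copy of) $S$ in the neighborhood pattern prescribed by the role of $v$ in $H$. Applying a Ramsey-style argument across a large family of such $H'$-realizations (in the spirit of the proof of Theorem~\ref{thm:eh}) should produce either a large set $B\subseteq V(G)$ with a uniform forbidden attachment pattern to a set $A$ built from the pivoted images of the $S$'s, which directly yields the complete/anti-complete pair, or a sufficiently rich collection of cycle pivot-minors that combine with the ambient structure to reconstruct $H$ as a pivot-minor, contradicting the hypothesis. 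Theorem~\ref{thm:eh} enters in the latter branch to rule out the cycle-rich alternative.

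The main obstacle is precisely this conversion step. For $H=C_k$ the proof of Theorem~\ref{thm:eh} exploits the fact that induced paths give a canonical, robust skeleton that behaves predictably under pivots; for arbitrary $H$ there is no such universal skeleton, and the non-extendability condition becomes subtler. A natural avenue to circumvent this is to translate the problem, via the fundamental-graph correspondence, into the language of binary matroid minors: pivot-minors of bipartite graphs correspond to minors of binary matroids, so one can try to replace $H$ by a carefully chosen bipartite surrogate $H^{*}$ whose pivot-minor containment in $G$ is forced by the hypothesis, and then apply branch-decomposition and tangle machinery available for binary matroids. Identifying the correct $H^{*}$ for a general $H$, and carrying out the extension-to-$H$ argument that closes the induction, is the key difficulty I would expect to spend most of the effort on.
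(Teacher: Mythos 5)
The statement you are trying to prove is Conjecture~\ref{con:strongeh}, which the paper explicitly poses as an \emph{open problem}: the paper only establishes the special case $H=C_k$ (Theorem~\ref{thm: Strong EH}), via a bespoke argument using dominating induced paths, parity control of attachments, and Propositions~\ref{prop:cycle1}--\ref{prop:fan}. So there is no proof in the paper for general $H$ to compare against, and your proposal does not supply one: the paragraph you label ``the core of the argument'' is precisely the content of the conjecture, and you concede at the end that you do not know how to carry it out. A sketch whose decisive step is ``a Ramsey-style argument \ldots should produce either \ldots or \ldots'' is not a proof.

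Beyond the admitted gap, several of the steps you treat as routine are not valid for pivot-minors. First, the preliminary reductions fail: peeling off an isolated or universal vertex of $H$, or passing to a component of $H$, changes the class of $H$-pivot-minor-free graphs in ways that are not controlled by the inductive hypothesis, because pivot-minor containment is not monotone under these operations on $H$ (a graph with no $H$-pivot-minor can contain every component of $H$, and conversely excluding $H-v$ is a much stronger hypothesis than excluding $H$). Second, the notion of a realization of $H'=H-v$ ``extending'' to $H$ is not well defined: a pivot-minor of $G$ isomorphic to $H$ need not be obtained by adjoining one vertex to any pivot-minor isomorphic to $H'$, since the pivot sequences producing the two can be entirely unrelated, and pivoting scrambles the neighborhoods of the remaining vertices. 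This is exactly why the paper's proof of the $C_k$ case does not induct on $H$ at all, but instead works with concrete structures (long induced cycles, antiholes, strongly $k$-good fans) for which explicit pivot sequences producing $C_k$ can be exhibited. Finally, note that the statement you would be proving implies Theorem~\ref{thm:vertexminor} and strictly strengthens it, and that the analogous statement for induced subgraphs is false (triangle-free graphs); any correct proof must therefore exploit pivoting in an essential, quantitative way, which your outline does not do.
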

We prove that this conjecture holds if $H=C_k$. In other words, 
the class of graphs with no pivot-minor isomorphic to $C_k$
has the strong Erd\H{o}s-Hajnal property as follows. This implies Theorem~\ref{thm:eh}.

\begin{theorem}\label{thm: Strong EH}
For every integer $k\ge 3$, there exists $\varepsilon>0$ such that 
for all $n>1$, 
every $n$-vertex graph with no pivot-minor isomorphic to $C_k$
has two disjoint sets $A$, $B$ of vertices 
such that
$\abs{A},\abs{B}\ge \varepsilon n$
and 
$A$ is complete or anti-complete to $B$.
\end{theorem}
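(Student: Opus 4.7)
The plan is to reduce the theorem to a bipartite statement, translate the pivot-minor hypothesis into a forbidden long induced cycle condition, and then invoke (or prove) a strong Erd\H{o}s-Hajnal property for bipartite graphs with bounded induced cycle length.

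First, I would pass to a bipartite induced subgraph of $G$. By a random bipartition or maximum-cut argument, one extracts disjoint sets $X, Y \subseteq V(G)$ with $|X|, |Y| = \Omega(n)$ such that $B := G[X, Y]$ is a bipartite induced subgraph on a linear fraction of $V(G)$. Induced subgraphs inherit the exclusion of $C_k$ as a pivot-minor, and a strong Erd\H{o}s-Hajnal pair in $B$ lifts to one in $G$ with an adjusted constant, so it suffices to handle the bipartite case. For even $k$ this is immediate; for odd $k$, bipartite graphs are closed under pivoting and hence automatically exclude odd-length cycle pivot-minors, so one works slightly harder and exhibits a related even cycle whose exclusion in $B$ is forced by the exclusion of $C_k$ in $G$ (for instance, by using an auxiliary odd path across $V(G) \setminus (X \cup Y)$ to close a long induced even cycle in $B$ into a long induced odd cycle in $G$, from which $C_k$ is extracted via the next step).

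Second, I would prove the following cycle-shortening lemma. If $uv$ is an edge on an induced cycle $C_{2\ell}$ of a bipartite graph, with cycle-neighbors $u', v'$ of $u, v$ respectively (distinct from $v, u$), then pivoting on $uv$ adds exactly the edge $u'v'$ while preserving the rest of the cycle, so deleting $u, v$ together with any off-cycle vertices yields an induced $C_{2\ell - 2}$ as a pivot-minor. This uses the elementary fact that in the pivot partition of the cycle the common-neighbor part is empty and the sets $N(u) \setminus N(v)$, $N(v) \setminus N(u)$ restricted to the cycle are exactly $\{u'\}$ and $\{v'\}$. Iterating the lemma shows that any bipartite graph containing an induced cycle of length at least $2k$ has $C_{2k}$ as a pivot-minor; equivalently, a bipartite graph with no $C_{2k}$ pivot-minor has no induced cycle of length at least $2k$.

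It therefore suffices to prove that bipartite graphs with no induced cycle of length at least $2k$ satisfy the strong Erd\H{o}s-Hajnal property; I expect this to be the main obstacle and the heart of the argument. My approach would be induction on $k$ (equivalently, on the excluded cycle length): assuming no linear-size strong Erd\H{o}s-Hajnal pair exists, Ramsey-type extractions together with iterated neighborhood refinement (pigeonholing on adjacency patterns to a growing ``beacon'' set) produce arbitrarily long induced paths, and then a delicate closing step joins such a path into an induced cycle of length at least $2k$, contradicting the hypothesis. The main technical difficulty lies in the closing step, which requires controlling the ``mixing'' of neighborhoods at each refinement level so that the final closing edge exists without shortcutting the path; structural or $\chi$-boundedness results on bipartite graphs of bounded induced cycle length may be useful here. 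The resulting $\varepsilon$ depends on $k$ in a tower-type manner through the iterated refinements.
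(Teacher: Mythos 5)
The reduction in your first step does not work, and this is a fatal gap. A random bipartition or max-cut argument gives a \emph{spanning} bipartite subgraph by discarding edges inside each part; it does not give an \emph{induced} bipartite subgraph. In general $G$ has no linearly-sized induced bipartite subgraph at all: if $G=K_n$, the largest induced bipartite subgraph has only two vertices. The bipartite graph $B=G[X,Y]$ you propose to work with (edges only between $X$ and $Y$) is not an induced subgraph of $G$; it is obtained from $G[X\cup Y]$ by deleting the intra-part edges, and this changes the pivot operation entirely, since pivoting on an edge $uv$ complements pairs according to the \emph{full} neighbourhoods of $u$ and $v$. So ``induced subgraphs inherit the exclusion of $C_k$ as a pivot-minor'' simply does not apply to $B$: excluding $C_k$ as a pivot-minor of $G$ gives no information about pivot-minors of $B$. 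Worse, a long induced cycle in $B$ need not even be an induced cycle of $G$ (again $G=K_n$, $B$ a complete bipartite graph with many induced $4$-cycles, is a clean counterexample), so the ``auxiliary odd path'' repair for odd $k$ also has no foothold. Your Step~3 (strong Erd\H{o}s--Hajnal for bipartite graphs of bounded induced cycle length) is, as you acknowledge, only a direction rather than a proof, but the real problem is upstream: even granting Step~3, the reduction collapses.

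For comparison, the paper's proof takes a very different route and never passes to a bipartite graph. It applies R\"odl's theorem (Corollary~\ref{cor:rodl}) to locate a linear-size set $U$ on which either $G[U]$ or $\overline G[U]$ has small maximum degree; it then uses Lemma~\ref{lem: tree} to build a dominating induced tree and the weighting lemma (Lemma~\ref{lem: tree path weight}) to either produce an anticomplete pure pair directly or restrict attention to a dominating induced path~$P$. In the $\overline G$ case, it invokes the Bonamy--Bousquet--Thomass\'e lemma (Lemma~\ref{lem: one direction}) to find a long hole, hence a long antihole in $G$, which yields $C_k$ as a pivot-minor by Proposition~\ref{prop:antihole}. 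In the $G$ case, the core is Lemma~\ref{lem: main}: a careful classification of how vertices off $P$ attach to three consecutive segments of $P$, combined with parity arguments and the fact that a strongly $k$-good generalized fan has $C_k$ as a pivot-minor (Proposition~\ref{prop:fan}), forces either $C_k$ as a pivot-minor or a linear pure pair. None of this structure is present in your proposal. Your cycle-shortening lemma (``pivot an edge of $C_m$ to get $C_{m-2}$'') is correct and is exactly Proposition~\ref{prop:cycle1} of the paper, but as a building block it is a small piece.
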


This paper is organized as follows.
In Section~\ref{sec:prelim}, we will introduce basic definitions
and review necessary theorems of R\"odl~\cite{Rodl1986}
and Bonamy, Bousquet, and Thomass\'e~\cite{BBT}.
In Section~\ref{sec:cycle}, we will present several tools to find a pivot-minor isomorphic to $C_k$. In particular, it proves that a long anti-hole contains $C_k$ as a pivot-minor.
In Section~\ref{sec:proof}, we will present the proof of the main theorem, Theorem~\ref{thm: Strong EH}.
In Section~\ref{sec:discussion}, 
we will relate our theorem to 
the problem on $\chi$-boundedness, and discuss known results and open problems related to polynomial $\chi$-boundedness and the Erd\H{o}s-Hajnal property.

\section{Preliminaries}\label{sec:prelim}

Let $\mathbb{N}$ be the set of positive integers and for each $n\in \mathbb{N}$, we write $[n]:=\{1,2,\dots,n\}$.
For a graph $G=(V,E)$, let $\overline{G}= (V, \binom{V}{2}- E)$ be the complement of $G$. 
We write $\Delta(G)$ and $\delta(G)$
to denote the maximum degree of $G$ and the minimum degree of $G$ respectively.

Let $T$ be a tree rooted at a specified node $v_r$, called the \emph{root}. If the path from  $v_r$ to a node $y$ in $T$ contains $x\in V(T)-\{y\}$, we say that $x$ is an \emph{ancestor} of $y$, and $y$ is a \emph{descendant} of $x$. If one of $x$ and $y$ is an ancestor of the other, we say that $x$, $y$ are \emph{related}. We say that two disjoint sets $X$ and $Y$ of nodes of $T$ are \emph{unrelated} if no pairs of $x\in X$ and $y\in Y$ are related.

For disjoint vertex sets $X$ and $Y$, we say $X$ is \emph{complete} to $Y$ if every vertex
of $X$ is adjacent to all vertices of $Y$. 
We say $X$ is \emph{anti-complete} to $Y$
if every vertex of $X$ is non-adjacent to~$Y$.
A \emph{pure pair} of a graph $G$ is
a pair $(A,B)$ of disjoint subsets of $V(G)$
such that $A$ is complete or anticomplete to $B$.

For a vertex $u$, let $N_{G}(u)$ denote the set of neighbors of $u$ in $G$.
For each $U\subseteq V(G)$, we write 
\[
    N_G(U)  :=\bigcup_{u\in U}N_G(u)- U.
\]

The following lemma is proved in Section~2 of~\cite{BBT}.
\begin{lemma}[Bonamy, Bousquet, and Thomass\'e~\cite{BBT}]\label{lem: tree}
For every connected graph $G$ and a vertex $v_r\in V(G)$, there exist an induced subtree $T$ of $G$ rooted at $v_r$ and a function $r: V(G)\rightarrow V(T)$ satisfying the following.
\begin{enumerate}[(T1)]
\item\label{T1} $r(v_r)=v_r$ and for each $u\in V(G) - \{v_r\}$, the vertex $r(u)$ is a neighbor of $u$. In particular, $T$ is a dominating tree of $G$. 
\item\label{T2} If $r(x)$ and $r(y)$ are not related, then $xy \notin E(G)$.
\end{enumerate}
\end{lemma}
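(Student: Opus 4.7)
The plan is to prove Lemma~\ref{lem: tree} by induction on $|V(G)|$. The base case $|V(G)|=1$ is immediate: take $T=G$ and $r(v_r)=v_r$, which trivially satisfies (T1) and (T2). For the inductive step I would root $T$ at $v_r$ and recurse on each component of $G-v_r$. Specifically, let $C_1,\dots,C_m$ be the components of $G-v_r$; for each $i$, choose a vertex $w_i\in V(C_i)\cap N_G(v_r)$ (which exists by the connectedness of $G$), apply the inductive hypothesis to the connected graph $G[V(C_i)]$ rooted at $w_i$ to obtain $(T_i, r_i)$, and then form $T$ by attaching each $T_i$ to $v_r$ via the edge $v_r w_i$. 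Define $r(v_r)=v_r$, $r(w_i)=v_r$ for each $i$, and $r(v)=r_i(v)$ for $v\in V(C_i)\setminus\{w_i\}$.

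Checking (T1) and (T2) is then largely routine. For (T1), each $r(w_i)=v_r$ is a neighbor of $w_i$ and, by induction, $r_i(v)\in N_G(v)$ for all $v\in V(C_i)\setminus\{w_i\}$. For (T2), any edge of $G$ either lies entirely inside some $V(C_i)$, in which case the relatedness of $r(x)$ and $r(y)$ in $T_i$ lifts to relatedness in $T$ because $T_i$ is a subtree of $T$; or it is incident to $v_r$, in which case one of $r(x), r(y)$ equals $v_r$, which is an ancestor of every vertex of $T$ and hence related to every vertex. Edges between distinct $V(C_i)$ and $V(C_j)$ cannot occur, since $C_i$ and $C_j$ sit in different components of $G-v_r$.

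The main obstacle is ensuring that the combined tree $T$ is \emph{induced} in $G$. Any vertex $x\in V(T_i)\setminus\{w_i\}$ that also lies in $N_G(v_r)$ would contribute a chord $v_r x$ in $G[V(T)]$ that is absent from $E(T)$, violating the induced property. To overcome this, I would strengthen the inductive statement to permit a prescribed forbidden set $F\subseteq V(G)\setminus\{v_r\}$ that $T$ is required to avoid, together with a compatibility condition ensuring that each $v\in F$ retains a neighbor outside $F$ so that it can still be served by $r$. Passing $F_i := (V(C_i)\cap N_G(v_r))\setminus\{w_i\}$ (together with the inherited part of the outer $F$) into the recursive call on $G[V(C_i)]$ then suppresses every potential chord from $v_r$ into $T_i$. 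The genuinely delicate step, where I expect the real technical work to concentrate, is choosing each $w_i$ so that this compatibility condition continues to hold in the recursion; this requires a careful selection of $w_i$ (for example, via a minimum-distance argument in $G[V(C_i)]$ from the vertices of $V(C_i)\setminus N_G(v_r)$) that leverages the connectedness of $G[V(C_i)]$ to guarantee every vertex of $F_i$ keeps a neighbor in $V(C_i)\setminus F_i$.
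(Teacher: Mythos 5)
The paper does not prove this lemma itself; it cites the proof in Section~2 of~\cite{BBT}, which builds $T$ by a greedy, vertex-by-vertex growth rather than by recursion on subgraphs. Your component-recursion with a forbidden set has a genuine gap that no choice of the representatives $w_i$ can repair. Let $G$ consist of a vertex $v_r$ adjacent to the three vertices $a_1,a_2,a_3$ of the $6$-cycle $a_1 b_1 a_2 b_2 a_3 b_3 a_1$. Then $G-v_r$ has a single component $C$, and for every choice of $w\in\{a_1,a_2,a_3\}$ the forbidden set $F_1=\{a_1,a_2,a_3\}\setminus\{w\}$ disconnects $G[C]$: the vertex $b_j$ ``opposite'' $w$ has both of its neighbors inside $F_1$ and becomes isolated, so $G[C]-F_1$ is not connected and the strengthened inductive hypothesis cannot be invoked --- even though your stated compatibility condition (every vertex of $F_1$ keeps a neighbor outside $F_1$) does hold. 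Your suggested minimum-distance rule for picking $w_i$ is also inert here, since all three $a_i$ are at distance $1$ from $V(C)\setminus N_G(v_r)$.

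The lemma is nevertheless true for this $G$: take $T$ with $V(T)=\{v_r,a_1,a_2\}$ and edges $v_r a_1$, $v_r a_2$, with $r(a_3)=v_r$, $r(b_1)=r(b_3)=a_1$, $r(b_2)=a_2$. But this $T$ places two children of $v_r$ inside the single component $C$, which a scheme of one $w_i$ per component of $G-v_r$ can never produce. The underlying issue is that the root of the desired induced tree may need to branch many times into one component of $G-v_r$, so components of $G-v_r$ are simply the wrong recursion units, and no side condition on $F$ fixes that. The construction in~\cite{BBT} avoids this entirely: starting from $T=\{v_r\}$ with $r\equiv v_r$ on $N[v_r]$, it repeatedly selects a vertex $u$ that is already dominated (so $r(u)$ is defined), not yet in $T$, and has an undominated neighbor, choosing $u$ so that $r(u)$ is suitably extremal in the current tree; it attaches $u$ as a child of $r(u)$ and sets $r(w):=u$ for each newly dominated $w$. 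That local choice rule keeps $T$ induced and yields~\eqref{T2} without any global connectivity invariant to maintain.
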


R\"odl~\cite{Rodl1986} proved the following theorem.
Its weaker version was later proved by Fox and Sudakov~\cite{FS2008b} without using the regularity lemma.
A set $U$ of vertices of $G$ 
is an \emph{$\varepsilon$-stable set} of a graph $G$
if $G[U]$ has at most $\varepsilon\binom{\abs{U}}{2}$ edges.
Similarly, $U$ is an \emph{$\varepsilon$-clique} of a graph $G$
if $G[U]$ has at least $(1-\varepsilon)\binom{\abs{U}}{2}$ edges.

\begin{theorem}[R\"odl~\cite{Rodl1986}]\label{thm: rodl}
For all $\epsilon>0$ and a graph $H$, 
there exists $\delta>0$ such that 
every $n$-vertex graph $G$ with no induced subgraph isomorphic to $H$
has an $\varepsilon$-stable set or an $\varepsilon$-clique of size at least $\delta n$.
\end{theorem}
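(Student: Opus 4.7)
The plan is to derive the theorem from Szemer\'edi's regularity lemma combined with the induced counting lemma for regular pairs and a multicolour Ramsey argument applied to the reduced graph. Write $h := \abs{V(H)}$ and let $\varepsilon' > 0$ be a small parameter, chosen so that $\varepsilon' \ll \varepsilon$ and $\varepsilon'$ is small enough in terms of $h$ for the induced counting lemma to apply to $h$-vertex configurations with densities in $[\varepsilon/2, 1-\varepsilon/2]$. I would apply Szemer\'edi's regularity lemma to $G$ with regularity parameter $\varepsilon'$ and a lower bound $K_0$ on the number of parts, where $K_0$ is chosen large in terms of $h$ and $\varepsilon'$. This yields an $\varepsilon'$-regular equipartition $V_0, V_1, \ldots, V_k$ with $\abs{V_0} \leq \varepsilon' n$, equal part sizes $m = \abs{V_i}$ for $i \geq 1$, and at most $\varepsilon' k^2$ irregular pairs.

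I would then colour each regular pair $(V_i, V_j)$ with $1 \leq i < j \leq k$ by one of three colours: \emph{sparse} if its density is at most $\varepsilon/2$, \emph{dense} if its density is at least $1 - \varepsilon/2$, and \emph{medium} otherwise. Since the number of irregular pairs is at most $\varepsilon' k^2$, a short cleaning step, e.g.\ passing to an independent set in the auxiliary ``irregular graph'' via Tur\'an's theorem, leaves a set $S \subseteq [k]$ of indices in which no pair is irregular and $\abs{S} \geq R_3(h)$ (the three-colour Ramsey number for $K_h$), provided $\varepsilon'$ is small enough relative to $h$ and $K_0$ is large enough.

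Next, by the three-colour Ramsey theorem applied to $S$, there exist $h$ indices $i_1, \ldots, i_h \in S$ whose pairwise colours all agree. The medium option is excluded: if every pair $(V_{i_a}, V_{i_b})$ were $\varepsilon'$-regular with density in $[\varepsilon/2, 1-\varepsilon/2]$, then the induced counting lemma would embed every $h$-vertex graph, in particular $H$, as an induced subgraph of $G$, contradicting the hypothesis. Hence the pairwise colour of the $h$ selected parts is either sparse throughout or dense throughout. In the dense case, the density of edges within $U := V_{i_1} \cup \cdots \cup V_{i_h}$ is at least $1 - \varepsilon$ once the within-part pairs and at most $\varepsilon/2$-deficit across-part pairs are combined, so $U$ is an $\varepsilon$-clique of size $hm \geq \delta n$ with $\delta := h/(2k)$; the sparse case is symmetric and yields an $\varepsilon$-stable set of the same size. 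Because $k$ is bounded by a function of $\varepsilon$ and $h$ alone, $\delta$ depends only on $\varepsilon$ and $H$, completing the proof.

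The principal obstacle is the balancing of parameters: $\varepsilon'$ must simultaneously be small enough for the induced counting lemma to embed $H$ at medium densities (this typically requires $\varepsilon'$ polynomially small in $\varepsilon/2$ and $1/h$), small enough that the irregular-pair cleaning step preserves at least $R_3(h)$ indices, and compatible with the lower bound $K_0$ on the partition size. Once these parameters are set consistently, the Ramsey extraction and the final density estimate are straightforward.
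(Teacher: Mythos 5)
The paper does not prove this theorem; it quotes it from R\"odl and only remarks that Fox and Sudakov later gave a regularity-free proof of a weaker version. Your route --- Szemer\'edi regularity, a three-colouring of the regular pairs into sparse/medium/dense, Tur\'an cleaning of the irregular pairs, Ramsey on the reduced graph, and the induced counting lemma to rule out the medium colour --- is exactly R\"odl's original argument, and those steps are all set up correctly, including the parameter dependencies you flag at the end.

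There is, however, a genuine gap in the final density estimate: you extract only $h=\abs{V(H)}$ monochromatic parts, and the regularity lemma gives no control whatsoever over the edges \emph{inside} each part $V_{i_a}$. The set $U=V_{i_1}\cup\dots\cup V_{i_h}$ has $\binom{hm}{2}=\binom{h}{2}m^2+h\binom{m}{2}$ vertex pairs, of which a proportion of roughly $1/h$ lie within single parts. In the dense case all of these within-part pairs could be non-edges, so the density of $G[U]$ could be as low as about $1-1/h$; for instance with $h=3$, three internally empty but pairwise complete parts give $U$ density about $2/3$, which is not an $\varepsilon$-clique for any $\varepsilon<1/3$. The sparse case fails symmetrically. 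The standard repair is to run the Ramsey step with $t:=\max\left(h,\lceil 4/\varepsilon\rceil\right)$ parts instead of $h$, requiring $\abs{S}\ge R_3(t)$, which only changes the constants: any $h$ of $t$ pairwise-medium parts still yield an induced copy of $H$, while the within-part pairs now account for at most a $2/t\le\varepsilon/2$ fraction of $\binom{tm}{2}$, so the total deficit (or excess) of edges is at most $\frac{\varepsilon}{2}\binom{tm}{2}+\frac{\varepsilon}{2}\binom{tm}{2}=\varepsilon\binom{tm}{2}$, as required. One further small correction: $\delta$ should be defined via the regularity lemma's \emph{upper bound} $M(\varepsilon',K_0)$ on the number of parts (say $\delta=t/(2M)$), not via the realized $k$, which depends on $G$; you note the right fact but write $\delta:=h/(2k)$. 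With these adjustments the proof is complete.
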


We will use the following simple lemma. We present its proof for completeness.
\begin{lemma}\label{lem:bounded}
    Let $G$ be a graph. 
            Every  $\varepsilon$-stable set $U$ of $G$
            has a subset $U'$ of size at least $\abs{U}/2$
            with $\Delta(G[U'])\le 4\varepsilon \abs{U'}$.

\end{lemma}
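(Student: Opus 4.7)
The plan is to delete the vertices of large degree by a one-shot Markov (averaging) argument; no iterative cleaning is needed. Write $n := \abs{U}$. Because $U$ is $\varepsilon$-stable, $G[U]$ has at most $\varepsilon\binom{n}{2}$ edges, hence
\[
\sum_{v\in U}\deg_{G[U]}(v)\;=\;2\,e(G[U])\;\le\;2\varepsilon\binom{n}{2}\;\le\;\varepsilon n^{2}.
\]

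Now define $U':=\{v\in U:\deg_{G[U]}(v)\le 2\varepsilon n\}$. The point is that if $\abs{U'}<n/2$, then more than $n/2$ vertices of $U$ would have degree greater than $2\varepsilon n$ in $G[U]$, forcing the degree sum to exceed $(n/2)\cdot 2\varepsilon n=\varepsilon n^{2}$, contradicting the estimate above. Therefore $\abs{U'}\ge n/2=\abs{U}/2$.

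Finally, for every $v\in U'$, $\deg_{G[U']}(v)\le\deg_{G[U]}(v)\le 2\varepsilon n$, and since $n\le 2\abs{U'}$ we get $\deg_{G[U']}(v)\le 4\varepsilon\abs{U'}$, giving $\Delta(G[U'])\le 4\varepsilon\abs{U'}$ as required. There is essentially no obstacle here; the only subtlety is the factor $2$ loss in passing from the bound $2\varepsilon n$ on degrees in $G[U]$ to the desired bound in terms of $\abs{U'}$, which is exactly what accounts for the constant $4$ in the statement.
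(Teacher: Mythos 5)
Your argument is correct and is essentially identical to the paper's proof: both define $U'$ as the set of vertices of degree at most $2\varepsilon\abs{U}$ in $G[U]$, apply the same averaging bound on $\sum_{v\in U}\deg_{G[U]}(v)$ to conclude $\abs{U'}\ge\abs{U}/2$, and then convert the degree bound from $\abs{U}$ to $\abs{U'}$ at the cost of a factor of $2$.
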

\begin{proof}
Let $U'$ be the set of vertices of degree at most 
    $2\varepsilon \abs{U}$ in $G[U]$.
    Because 
    $\sum_{v\in U} \deg_{G[U]}(v) < \varepsilon \abs{U}^2$, we have $\abs{U'}\ge \abs{U}/2$. Moreover, for each vertex $v\in U'$, we have $\deg_{G[U']} (v)\le 2\varepsilon \abs{U}\le 4\varepsilon\abs{U'}$.
\end{proof}

Using Lemma~\ref{lem:bounded}, we can deduce the following corollary of Theorem~\ref{thm: rodl}.
\begin{corollary}\label{cor:rodl}
    For all $\alpha>0$ and a graph $H$, 
there exists $\delta>0$ such that 
every graph $G$ with no induced subgraph isomorphic to $H$
has a set $U\subseteq V(G)$ with $\abs{U}\ge \delta \abs{V(G)}$
such that 
either $\Delta(G[U])\le \alpha \abs{U}$
or $\Delta(\overline{G}[U])\le \alpha\abs{U}$.
\end{corollary}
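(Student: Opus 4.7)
The plan is to derive this as a direct combination of R\"odl's theorem (Theorem~\ref{thm: rodl}) and Lemma~\ref{lem:bounded}, together with the elementary observation that an $\varepsilon$-clique in $G$ is exactly an $\varepsilon$-stable set in the complement $\overline{G}$, and that $G$ is $H$-free if and only if $\overline{G}$ is $\overline{H}$-free.

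First I would apply Theorem~\ref{thm: rodl} with parameter $\varepsilon:=\alpha/4$ and forbidden graph $H$ to obtain some $\delta_0>0$ so that every $n$-vertex graph with no induced $H$ contains an $\varepsilon$-stable set or an $\varepsilon$-clique of size at least $\delta_0 n$. Let $G$ be a graph with $n:=\abs{V(G)}$ and no induced subgraph isomorphic to $H$, and let $U\subseteq V(G)$ be such a set, so $\abs{U}\ge \delta_0 n$.

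If $U$ is an $\varepsilon$-stable set in $G$, then Lemma~\ref{lem:bounded} applied to $G$ and $U$ yields a subset $U'\subseteq U$ with $\abs{U'}\ge \abs{U}/2$ and $\Delta(G[U'])\le 4\varepsilon\abs{U'}=\alpha\abs{U'}$. If instead $U$ is an $\varepsilon$-clique in $G$, then $U$ is an $\varepsilon$-stable set in $\overline{G}$, and applying Lemma~\ref{lem:bounded} to $\overline{G}$ and $U$ gives a subset $U'\subseteq U$ with $\abs{U'}\ge\abs{U}/2$ and $\Delta(\overline{G}[U'])\le\alpha\abs{U'}$. Setting $\delta:=\delta_0/2$ ensures $\abs{U'}\ge \delta n$ in either case, which is exactly the conclusion of the corollary.

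There is essentially no obstacle here beyond picking the right numerical constants; the whole argument is a two-line bookkeeping exercise once one notices that Lemma~\ref{lem:bounded} is designed to convert an average edge-density bound into a maximum-degree bound at the cost of a factor of two in the set size, and that taking complements handles the $\varepsilon$-clique case symmetrically.
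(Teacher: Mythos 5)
Your argument is correct and is exactly the proof the paper intends: the paper omits the proof of Corollary~\ref{cor:rodl}, saying only that it follows from Theorem~\ref{thm: rodl} via Lemma~\ref{lem:bounded}, and your derivation (apply R\"odl with $\varepsilon=\alpha/4$, then clean up with Lemma~\ref{lem:bounded} either in $G$ or in $\overline G$ depending on whether the output set is an $\varepsilon$-stable set or an $\varepsilon$-clique, taking $\delta=\delta_0/2$) is precisely that. The remark about $\overline G$ being $\overline H$-free in your plan is unnecessary, since R\"odl is applied to $G$ itself and only Lemma~\ref{lem:bounded} needs to be applied to $\overline G$, but this does not affect correctness.
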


The following easy lemma will be used to find a connected induced subgraph inside the output of Corollary~\ref{cor:rodl}.
We omit its easy proof.
\begin{lemma}\label{lem:conn}
    A graph $G$ has a pure pair $(A,B)$ such that $\abs{A}, \abs{B}\ge \abs{V(G)}/3$
    or 
    has a connected induced subgraph $H$ such that 
    $\abs{V(H)}\ge \abs{V(G)}/3$.
\end{lemma}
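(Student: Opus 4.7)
The plan is to examine the connected components of $G$. If some connected component $C$ satisfies $|V(C)| \geq |V(G)|/3$, then $G[V(C)]$ is the desired connected induced subgraph. Otherwise every component has fewer than $|V(G)|/3$ vertices, and I would build a pure pair by partitioning the components into two groups that are anti-complete to each other.

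Concretely, list the components as $C_1, C_2, \ldots, C_k$, and greedily form $A$ by unioning $V(C_1), V(C_2), \ldots$ one by one, stopping at the first index $j$ where $|A| \geq |V(G)|/3$. Such a $j$ exists because $|V(G)| \geq |V(G)|/3$. Since each component has fewer than $|V(G)|/3$ vertices, just before adding $C_j$ we had $|A| < |V(G)|/3$, and $|V(C_j)| < |V(G)|/3$, so at the stopping time $|A| < 2|V(G)|/3$. Therefore $B := V(G) \setminus A$ satisfies $|B| > |V(G)|/3$, and in particular $B \neq \emptyset$. Because $A$ and $B$ are unions of distinct connected components, there are no edges between them, so $(A,B)$ is an anti-complete pure pair with both sides of size at least $|V(G)|/3$.

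There is no genuine obstacle here: the argument is essentially a one-line greedy pigeonhole, and the strict inequality ``each component has size less than $|V(G)|/3$'' is precisely what is needed both to guarantee $|A| < 2|V(G)|/3$ at termination and to ensure $B$ is non-empty.
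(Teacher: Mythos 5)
Your proof is correct, and since the paper explicitly omits its proof of this lemma (``We omit its easy proof.''), there is no reference argument to diverge from; your greedy-on-components argument is precisely the natural one the authors had in mind. One tiny polish: the conclusion $|B| > |V(G)|/3$ indeed gives $|B|\ge |V(G)|/3$, and if $G$ has no vertices the claim is vacuous, so the degenerate case is fine.
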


\begin{lemma}[Bonamy, Bousquet, and Thomass\'e~{\cite[Lemma 3]{BBT}}]\label{lem: tree path weight}
Let $T$ be a tree rooted at $v_r$ and $w: V(T) \rightarrow \mathbb{R}$ be a non-negative weight function on $V(T)$ with $\sum_{x\in V(T)}w(x)=1$.
Then there exists either a path $P$ from $v_r$ with weight at least $1/4$ or two unrelated sets $A$ and $B$ both with weight at least $1/4$.
\end{lemma}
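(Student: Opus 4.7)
The plan is to build a \emph{heavy path} rooted at $v_r$ and then either use this path as $P$, or else partition the subtrees hanging off it into two unrelated sets $A, B$. Extend $w$ additively to vertex subsets by $w(X) := \sum_{x \in X} w(x)$, and for each $v \in V(T)$ write $W(v) := w(V(T_v))$, where $T_v$ is the subtree rooted at $v$. Starting from $u_0 := v_r$, I iteratively let $u_{i+1}$ be a child of $u_i$ whose subtree carries the largest weight, continuing so long as that weight exceeds $1/2$. This process terminates at some node $u_m$ (possibly $m = 0$), every child of which has subtree weight at most $1/2$. Write $P^* := u_0 u_1 \cdots u_m$ and $p := w(\{u_0,\dots,u_m\})$. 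If $p \ge 1/4$, output $P := P^*$ and we are done.

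Otherwise $p < 1/4$. Let $\mathcal{S}$ be the set of \emph{off-path subtrees}: for each $i < m$, the subtrees rooted at children of $u_i$ other than $u_{i+1}$, together with the subtrees rooted at all children of $u_m$. Each $S \in \mathcal{S}$ satisfies $w(V(S)) \le 1/2$: for $i < m$, the child $u_{i+1}$ has subtree weight strictly exceeding $1/2$, and since the children's subtree weights at $u_i$ sum to at most $1$, no other child of $u_i$ can exceed $1/2$; at $u_m$ the bound is the stopping condition. Moreover, any two distinct $S, S' \in \mathcal{S}$ are unrelated: if their roots are both children of a common $u_i$, they occupy different branches of $u_i$; if their roots are children of $u_i$ and $u_j$ with $i < j$, then the root of $S'$ lies in the subtree of $u_{i+1}$, whereas the root of $S$ lies in a different branch of $u_i$. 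In particular, the total weight covered is $w\bigl(\bigcup_{S \in \mathcal{S}} V(S)\bigr) = w(V(T) \setminus \{u_0,\dots,u_m\}) = 1 - p > 3/4$.

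Finally, I partition $\mathcal{S}$ into two groups whose unions will be $A$ and $B$. If some $S_0 \in \mathcal{S}$ has $w(V(S_0)) \ge 1/4$ (and hence at most $1/2$), take $A := V(S_0)$ and let $B$ be the union of the vertex sets of the other subtrees in $\mathcal{S}$; then $w(A) \in [1/4, 1/2]$ and $w(B) > 3/4 - 1/2 = 1/4$. Otherwise every $S \in \mathcal{S}$ has $w(V(S)) < 1/4$, and I greedily add the vertex sets of subtrees to $A$ until $w(A)$ first reaches $1/4$; this must occur since the total exceeds $3/4$, and then $w(A) < 1/4 + 1/4 = 1/2$, so $w(B) > 3/4 - 1/2 = 1/4$. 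Since $A$ and $B$ are each a union of vertex sets of subtrees in $\mathcal{S}$ and any two such subtrees are unrelated, the pair $(A, B)$ is unrelated, completing the argument. The most delicate point is the interplay between the two key properties of $\mathcal{S}$ — the uniform bound $\le 1/2$ on subtree weights and the pairwise-unrelatedness — both of which follow from the stopping rule and the fact that each $u_i$ has at most one child whose subtree has weight exceeding $1/2$.
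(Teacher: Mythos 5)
Your proof is correct. The paper states this lemma as a citation to Bonamy, Bousquet, and Thomass\'e (their Lemma~3) and does not reproduce a proof, so there is no in-paper argument to compare against; however, your heavy-path construction with threshold $1/2$ is the standard and essentially canonical argument for this statement. All the key points check out: along the heavy path each $u_i$ with $i<m$ has a unique child of subtree weight exceeding $1/2$ (since the children's subtree weights sum to at most $1$), which simultaneously gives the uniform bound $\le 1/2$ on every off-path subtree and the pairwise-unrelatedness of the members of $\mathcal{S}$; the off-path subtrees partition $V(T)\setminus\{u_0,\dots,u_m\}$, carrying weight $1-p>3/4$ when $p<1/4$; and the final greedy split into $A,B$ is valid in both subcases since each $A$ ends up with $w(A)\in[1/4,1/2)$ or $[1/4,1/2]$, leaving $w(B)>1/4$.
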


A \emph{hole} is an induced cycle of length at least $5$.
\begin{lemma}[Bonamy, Bousquet, and Thomass\'e~{\cite[Lemma 4]{BBT}}]\label{lem: one direction}
    For given $k\ge 3$, there exist $\alpha=\alpha(k)>0$ and $\epsilon=\epsilon(k)>0$ such that   
       for any $n$-vertex graph $G$ with $n\geq 2$
    and $\Delta(G)\leq \alpha n$, 
    if $G$ has no holes of length at least $k$ and has a dominating induced path, then $G$ contains a pair $(A,B)$ of disjoint vertex sets 
    such that $A$ is anticomplete to $B$ and 
    $\abs{A},\abs{B}\ge \epsilon n$.
\end{lemma}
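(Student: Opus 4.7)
The plan is to use the dominating induced path $P=p_1 p_2\cdots p_m$ as a one-dimensional coordinate on $V(G)$. Define $\pi:V(G)\to[m]$ by $\pi(p_i)=i$, and for $v\notin V(P)$ set $\pi(v)=\ell(v):=\min\{i:vp_i\in E(G)\}$, which exists since $P$ dominates. The strategy is to prove a ``short-edge'' property and then cut $[1,m]$ into a left piece, a middle piece wider than the edge span, and a right piece; the preimages of the outer pieces then form the required anticomplete pair.

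The key step is the short-edge property: there is a constant $C=C(k)$ such that every edge $uv\in E(G)$ satisfies $|\pi(u)-\pi(v)|\le C\alpha n$. This is proved by applying the no-long-hole hypothesis twice. First, for any $v\notin V(P)$ with path-neighbors $p_{i_1}<p_{i_2}<\cdots<p_{i_s}$, the cycle through $v$ and the subpath $p_{i_j}p_{i_j+1}\cdots p_{i_{j+1}}$ is an induced cycle of length $i_{j+1}-i_j+2$ (using inducedness of $P$ and that $v$ has no neighbors strictly between $p_{i_j}$ and $p_{i_{j+1}}$), hence $i_{j+1}-i_j\le k-3$; combined with $s\le\Delta(G)\le\alpha n$, this yields the span bound $r(v)-\ell(v)\le(\alpha n)(k-3)$. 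Second, for an edge $uv$ with $u,v\notin V(P)$ and $\ell(u)\le\ell(v)$, take $i^\ast$ to be the largest neighbor of $u$ on $P$ with $i^\ast\le\ell(v)$; then $u,p_{i^\ast},p_{i^\ast+1},\ldots,p_{\ell(v)},v,u$ is an induced cycle of length $\ell(v)-i^\ast+3$, forcing $\ell(v)-i^\ast\le k-4$, and since $i^\ast\le r(u)\le\ell(u)+(\alpha n)(k-3)$ we conclude $|\pi(u)-\pi(v)|\le(\alpha n)(k-3)+(k-4)\le C\alpha n$. The hybrid cases where one or both of $u,v$ lies on $P$ follow from the same considerations using inducedness of $P$.

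With the short-edge property in hand, let $F(t)=|\{v:\pi(v)\le t\}|$, $t_1=\min\{t:F(t)\ge\epsilon n\}$, and $t_2=\max\{t:F(t)\le(1-\epsilon)n\}$. If $t_2-t_1>C\alpha n$, then $A=\{v:\pi(v)\le t_1\}$ and $B=\{v:\pi(v)>t_2\}$ are disjoint of sizes $\ge\epsilon n$ and anticomplete by the short-edge property, and we are done. The main obstacle is the concentration case $t_2-t_1\le C\alpha n$, in which roughly $(1-2\epsilon) n$ vertices are squeezed into a $\pi$-interval $J=(t_1,t_2]$ of length at most $C\alpha n$. Here the induced subpath $P'=p_{t_1+1}\cdots p_{t_2}$ dominates the induced subgraph $G'=G[\pi^{-1}(J)]$, which still has no holes of length at least $k$. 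I would close this case by strong induction on $n$, choosing $\alpha(k)$ small enough and $\epsilon(k)$ close enough to $1/2$ that the effective ratio $\Delta(G')/|V(G')|\le\alpha/(1-2\epsilon)$ stays below the admissible threshold across the iterations; the recursion must terminate because the length of the dominating path shrinks by a factor at most $C\alpha$ at each step while the number of vertices shrinks by only a factor $(1-2\epsilon)$, so after boundedly many steps $m'$ drops below the degree bound and the ``successful cut'' alternative is forced. Tracking the accumulation of the factors $1/(1-2\epsilon)$ through this recursion, and calibrating $\alpha(k)$ and $\epsilon(k)$ so that the final pair of size $\ge\epsilon(k)n$ is delivered in the original graph, will be the subtlest part of the argument.
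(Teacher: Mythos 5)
This lemma is imported from Bonamy--Bousquet--Thomass\'e and not reproved in the paper, so the natural benchmark is the paper's own Lemma~\ref{lem: main}, whose proof is the pivot-minor analogue of the argument you need here. Your ``short-edge'' property is correct and cleanly proved: consecutive $P$-neighbours of a vertex are at distance at most $k-3$, so each neighbourhood on $P$ has span at most $(k-3)\Delta(G)$, and the cycle $u,p_{i^*},\dots,p_{\ell(v)},v,u$ then bounds the span of every edge by $O(k)\,\alpha n$. The spread-out case $t_2-t_1>C\alpha n$ is handled correctly.

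The gap is the concentration case, which is the entire content of the lemma, and the proposed recursion does not close it. After one step the new dominating path has length $m_1\le t_2-t_1\le C\alpha n\le C\Delta(G)$, and $\Delta$ does not decrease when passing to induced subgraphs; hence at every later level $t_2'-t_1'\le m'\le C\Delta(G)$ holds automatically, so the recursion is permanently in the concentration case and the ``successful cut'' alternative is never forced. Your termination claim is in fact backwards: once $m'$ is below the degree bound, a cut is impossible (it needs a gap of more than $C\alpha n$ positions inside a path of length at most $C\alpha n$), not forced. A concrete obstruction: an induced path of constant length about $1/\alpha$ with roughly $\alpha n$ private off-path neighbours attached to each path vertex has $m\le C\alpha n$ from the start, so $G'=G$ and the procedure makes no progress, although the graph plainly contains a huge anticomplete pair. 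Even if the recursion did bottom out, $n_i$ decays like $(1-2\epsilon)^i n$, so the $\Theta(\log n)$ levels needed would leave sets of size $n^{1-\Omega(1)}$, not $\epsilon n$; and taking $\epsilon$ ``close to $1/2$'' makes the loss factor $1/(1-2\epsilon)$ blow up rather than helping.

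The missing idea is a sliding window of constant width $k$ rather than a single cut of width $C\alpha n$. For each $i$ let $U_i^0$ be the $k$ consecutive path vertices $i,\dots,i+k-1$; the set of off-path vertices with a neighbour in $U_i^0$ has size at most $k\alpha n$, no vertex has $P$-neighbours on both sides of the window but none inside it (that closes an induced cycle of length at least $k+2$), and an edge between a vertex attached only to the left and one attached only to the right closes an induced cycle of length at least $k+4$ through the path. So the left-attached and right-attached classes are anticomplete for every $i$; since the left class is empty at $i=1$, has size close to $n$ at $i=s-k+1$, and grows by at most $\deg(p_i)\le\alpha n$ per step, an intermediate-value argument produces an $i_*$ at which both classes have size at least $\epsilon n$, a contradiction. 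This is exactly the scheme of Lemma~\ref{lem: main} in the paper, minus the parity bookkeeping needed there for pivot-minors. Your short-edge bound cannot substitute for this, because it only controls jumps of length $C\alpha n$, while the window that edges cannot cross has width only $k$.
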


\section{Finding a cycle as a pivot-minor}\label{sec:cycle}

\begin{figure}
    \begin{tikzpicture}
        \tikzstyle{every node}=[circle,draw,fill=black!50,inner sep=0pt,minimum width=4pt]
        \node at(-2,1.5) [label=$v$](v){};
        \node at(2,1.5) [label=$u$](w){};
        \foreach \x in {0,...,5}{
            \node at (36*\x:1) (x\x){};
        }
        \foreach \x in {0,...,3}{
            \draw (w)--(x\x);
        }
        \foreach \x in {2,...,5}{
            \draw (v)--(x\x);
        }
        \draw (v)--(w);
        \draw (x0)--(x1);
        \draw (x4)--(x5);
        \draw [thick](x1)--(x5)--(x0);
        \draw [thick](x4)--(x1);
        \draw [thick](x2)--(x5);
        \node at (0,-.5) [rectangle,draw=none,fill=none]{$G$};
        \begin{scope}[xshift=5cm]
            \node at (0,-.5) [rectangle,draw=none,fill=none]{$G\pivot uv$};
            \node at(-2,1.5) [label=$u$](v){};
            \node at(2,1.5) [label=$v$](w){};
            \foreach \x in {0,...,5}{
                \node at (36*\x:1) (x\x){};
            }
            \foreach \x in {0,...,3}{
                \draw (w)--(x\x);
            }
            \foreach \x in {2,...,5}{
                \draw (v)--(x\x);
            }
            \draw (v)--(w);
            \draw (x0)--(x1);
            \draw (x4)--(x5);
            \draw [color=red,thick] (x4)--(x0);
            \draw [color=blue,thick] (x5)--(x3)--(x4)--(x2);
            \draw [color=green,thick] (x0)--(x2)--(x1)--(x3)--(x0);
        \end{scope}
    \end{tikzpicture}
    \caption{Pivoting $uv$.}\label{fig:pivot}
\end{figure}
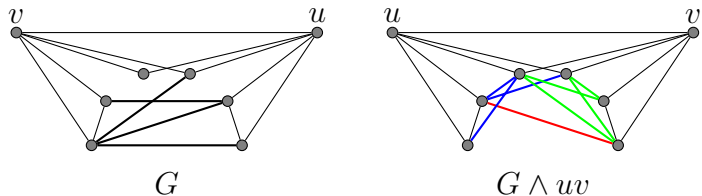
For a given graph $G$ and an edge $uv$, a graph $G \pivot uv$ obtained from $G$ by \emph{pivoting} $uv$ is defined as follows. 
Let $V_1= N_G(u)\cap N_G(v)$, $V_2=N_G(u)-N_G(v)$, $V_3=N_G(v)-N_G(u)$.
Then $G\pivot uv$ is the graph obtained from $G$ by complementing adjacency between vertices between $V_i$ and $V_j$ for all $1\leq i< j\leq 3$
and swapping the label of $u$ and $v$. 
See Figure~\ref{fig:pivot} for an illustration.
We say that $H$ is a \emph{pivot-minor} of $G$ if $H$ can be obtained from $G$ by deleting vertices and pivoting edges.
For this paper, we will also say that $H$ is a pivot-minor of $G$, when $G$ has a pivot-minor isomorphic to $H$.
A pivot-minor $H$ of $G$ is \emph{proper} if $\abs{V(H)}<\abs{V(G)}$.

We describe several scenarios for constructing $C_k$ as a pivot-minor. 
The following proposition is an easy one; One can obtain a desired pivot-minor from a longer cycle of the same parity.
\begin{proposition}\label{prop:cycle1}
For $m\ge k\ge 3$ with $m\equiv k\pmod 2$, the cycle $C_m$ has a pivot-minor isomorphic to $C_k$.
\end{proposition}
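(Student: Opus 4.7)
The plan is a short induction on $m$. The base case $m=k$ is trivial (take the empty sequence of pivots and deletions). For the inductive step I would show that whenever $m\ge k+2\ge 5$, $C_m$ has a pivot-minor isomorphic to $C_{m-2}$, and then apply the induction hypothesis (valid since $m-2\equiv k\pmod 2$ and $m-2\ge k$).

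To produce $C_{m-2}$ as a pivot-minor of $C_m$, I would label $V(C_m)=\{v_1,\dots,v_m\}$ in cyclic order and pivot the edge $v_1v_2$. With $m\ge 5$ one computes $N(v_1)\cap N(v_2)=\emptyset$, $N(v_1)\setminus N(v_2)=\{v_m\}$, and $N(v_2)\setminus N(v_1)=\{v_3\}$, so the pivot only adds the single edge $v_3v_m$ and then relabels $v_1\leftrightarrow v_2$. Writing out the resulting edge set $\{v_1v_2,\,v_1v_3,\,v_2v_m,\,v_3v_m,\,v_3v_4,\dots,v_{m-1}v_m\}$, one sees that deleting the two vertices $v_1,v_2$ leaves exactly the cycle $v_3v_4\cdots v_mv_3$ on $m-2$ vertices. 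Hence $(C_m\pivot v_1v_2)-\{v_1,v_2\}\cong C_{m-2}$.

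This is genuinely the whole argument; there is no real obstacle. The only thing to be a little careful about is the side condition $m\ge 5$ for the pivot computation (so that $v_3\ne v_m$ and the resulting cycle has at least three vertices), but the hypothesis $m\equiv k\pmod 2$ with $m>k\ge 3$ forces $m\ge k+2\ge 5$, so the inductive step applies in exactly the range where it is needed.
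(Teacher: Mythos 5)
Your proof is correct and is essentially identical to the paper's: both proceed by induction, pivoting a single edge of $C_m$ and deleting its endpoints to obtain $C_{m-2}$. You simply spell out the pivot computation in more detail than the paper does.
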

\begin{proof}
    We proceed by induction on $m-k$. We may assume that $m>k$. 
    Let $xy$ be an edge of $C_m$. Then $(C_m\pivot xy)-x-y$ is isomorphic to $C_{m-2}$, which contains a pivot-minor isomorphic to $C_k$ by the induction hypothesis.
\end{proof}
\begin{proposition}\label{prop:antihole}
    For integers $k\ge 3$ and $m\ge \frac{3}{2} k+6$, the graph $\overline{C_m}$  has a pivot-minor isomorphic to $C_k$.
\end{proposition}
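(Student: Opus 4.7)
My plan is to reduce the problem to Proposition~\ref{prop:cycle1}: if I can exhibit $C_{m'}$ as a pivot-minor of $\overline{C_m}$ for some $m' \geq k$ with $m' \equiv k \pmod{2}$, then Proposition~\ref{prop:cycle1} finishes the proof. So the task becomes producing long induced cycles of both parities as pivot-minors of $\overline{C_m}$, after which Proposition~\ref{prop:cycle1} takes care of stepping down to the right length.

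As a first concrete case, I would pivot $\overline{C_m}$ at the edge $\{0, 3\}$; this is indeed an edge of $\overline{C_m}$, since $|0-3|=3$ and $m \geq \tfrac{3}{2}k + 6 \geq 11 > 5$. With the neighborhood partition $V_1 = \{5, 6, \ldots, m-2\}$, $V_2 = \{2, 4\}$, $V_3 = \{1, m-1\}$, a direct computation of the complemented adjacencies shows that after deleting the pivot vertices $0$ and $3$, the subset $\{1, 2, 4, 5, m-2, m-1\}$ induces the $6$-cycle $1 - 2 - 4 - 5 - (m-2) - (m-1) - 1$: all $\binom{6}{2}=15$ pairs can be checked against the pivot rules, using that $m \geq 9$ so for instance $|5-(m-2)| = m-7 \geq 2$ keeps $5$ and $m-2$ adjacent via the within-$V_1$ rule, while $V_2V_3$ complementation creates $1\sim 2$ and $V_1V_2$, $V_1V_3$ complementations create $4\sim 5$ and $(m-2)\sim(m-1)$. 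This yields $C_6$, and hence (via Proposition~\ref{prop:cycle1}) both $C_4$ and $C_6$ as pivot-minors.

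To cover all $k$ and both parities, I would extend this construction by inserting vertices from the interior of $V_1$ into the $6$-cycle. An interior vertex $v \in V_1 \setminus \{5, m-2\}$ is adjacent in the pivoted graph only to vertices of $V_1$, and in particular to both $5$ and $m-2$ whenever $7 \leq v \leq m-4$; naively inserting such a $v$ between $5$ and $m-2$ in the $6$-cycle creates an unwanted chord $5-(m-2)$. I would break such chords with further pivots on edges of the $\overline{P_{m-6}}$ structure carried by $V_1$, or alternatively choose a subset of $V_1$ arranged alternatingly so that successive insertions cancel the chords automatically. The bound $m \geq \tfrac{3}{2}k + 6$ is consistent with needing roughly $k$ main cycle vertices together with $\sim k/2$ auxiliary vertices for this chord cleanup, plus a constant buffer for the pivot endpoints and the two parities. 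The main obstacle is exactly this chord-cleanup while lengthening the induced cycle and simultaneously producing cycles of both parities; a clean uniform description very likely needs a multi-pivot construction rather than an incremental single-pivot extension, and verifying inducedness of the final cycle is where the bulk of the technical work should go.
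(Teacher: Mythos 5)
Your opening gambit is verified and correct as far as it goes: pivoting $\overline{C_m}$ at $\{0,3\}$, with $V_1=\{5,\dots,m-2\}$, $V_2=\{2,4\}$, $V_3=\{1,m-1\}$, does indeed leave $\{1,2,4,5,m-2,m-1\}$ inducing the $6$-cycle $1$--$2$--$4$--$5$--$(m-2)$--$(m-1)$--$1$ once $m\ge 9$ (I checked all fifteen pairs). And the reduction strategy --- get an induced $C_{m'}$ with $m'\ge k$, $m'\equiv k\pmod 2$, then invoke Proposition~\ref{prop:cycle1} --- is the same high-level plan the paper follows.

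The problem is that this only proves the cases $k\in\{4,6\}$. Everything beyond the base case --- inserting interior vertices of $V_1$, cleaning up the resulting $5$--$(m-2)$ chord, arranging vertices ``alternatingly so that successive insertions cancel the chords automatically,'' and somehow producing both parities --- is left as a sketch with no concrete construction. You acknowledge this yourself (``verifying inducedness of the final cycle is where the bulk of the technical work should go''), but that is precisely the content of the proposition. As written the argument has a genuine gap: no pivot sequence is specified for general $k$, so there is nothing to check. The paper closes this gap by introducing the notion of an $(s,t)$-cycle ($C_s$ with a set of $t$ consecutive vertices complemented) and proving a single clean reduction lemma (Lemma~\ref{lem:partialcomplement}): one pivot inside the complemented block takes an $(s,t)$-cycle to an $(s-2,t-6)$-cycle. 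Iterating this starting from $\overline{C_m}=(m,m)$-cycle shrinks the complemented part at a controlled rate while retaining an explicit induced cycle of length $4i+2$, after which a single extra pivot flips the parity if $k$ is odd. That reduction lemma --- a bookkeeping device that makes the ``multi-pivot construction'' you gesture at tractable and verifiable --- is the missing ingredient in your proposal; without something like it, the chord-cleanup problem you correctly identified remains unsolved.
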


Before proving Proposition~\ref{prop:antihole}, 
we present a simple lemma on partial complements of the cycle graph.
The \emph{partial complement}\footnote{We found this concept in a paper by Kami\'nski, Lozin, and 
Milani\v{c}~\cite{KLM2009}, though it may have been studied previously, as it is a  natural concept.} $G\oplus S$
of a graph $G$ by a set $S$ of vertices
is a graph obtained from $G$ by changing all edges within $S$ to non-edges and non-edges within $S$ to edges.

For $s\ge t\geq 0$, we say that $G$ is an \emph{$(s,t)$-cycle}
if $G$ is isomorphic to a graph $C_s\oplus X$
for a set $X$ of $t$ consecutive vertices in the cycle $C_s$.

\begin{figure}
    \begin{tikzpicture}[baseline]
        \tikzstyle{every node}=[circle,draw,fill=black!50,inner sep=0pt,minimum width=4pt]
        \fill [fill opacity=.1,fill=black,rounded corners] (-3,-3) rectangle  ++(1.5,6);
        \foreach \x in {1,...,3} {
            \node at (150-\x*30:2) [label=$v_{\x}$] (v\x) {};
        }
        \foreach \x in {4,...,6} {
            \node at (150-\x*30:2) [label=right:$v_{\x}$] (v\x) {};
        }   
        \foreach \x in {7,...,9} {
            \node at (150-\x*30:2) [label=below:$v_{\x}$] (v\x) {};
        }                  
        \node at (150:2) (v0){};
        \node at (-150:2) (v10){};
        \draw (v0)--(v1);
        \draw (v9)--(v10);
        \draw [snake=coil](v0)--(v10);
        \begin{scope}[dashed]
            \draw (v1)--(v2)--(v3)--(v4)--(v5)--(v6)--(v7)--(v8)--(v9);
        \end{scope}
        \foreach \x in {1,...,7}{
            \pgfmathtruncatemacro{\ystart}{\x+2}
            \foreach \y in {\ystart,...,9}{
                \draw (v\x)--(v\y);
            }
        }
        \draw [thick] (v2)--(v8);
    \end{tikzpicture} 
    $\quad\Rightarrow\quad$ 
        \begin{tikzpicture}[baseline]
            \tikzstyle{every node}=[circle,draw,fill=black!50,inner sep=0pt,minimum width=4pt]
            \fill [fill opacity=.1,fill=black,rounded corners] (-3,-3) rectangle  ++(4.3,6);
            \foreach \x in {1,3} {
                \node at (150-\x*30:2) [label=$v_{\x}$] (v\x) {};
            }
            \foreach \x in {4,...,6} {
                \node at (150-\x*30:2) [label=right:$v_{\x}$] (v\x) {};
            }   
            \foreach \x in {7,9} {
                \node at (150-\x*30:2) [label=below:$v_{\x}$] (v\x) {};
            }                  
            \node at (150:2) (v0){};
            \node at (-150:2) (v10){};
            \draw (v0)--(v1)--(v3)--(v4);
            \draw (v6)--(v7)--(v9)--(v10);
            \begin{scope}[dashed]
                \draw (v4)--(v5)--(v6);
                \draw (v1)--(v9)--(v3)--(v7)--(v1);
                \draw (v1)--(v4)--(v9);
                \foreach \x in {4,5,6}{
                    \draw (v1)--(v\x)--(v9);
                }
                \draw (v5)--(v7)--(v4);
                \draw (v6)--(v3)--(v5);
            \end{scope}
            \draw (v4)--(v6);
            \draw [snake=coil](v0)--(v10);
    \end{tikzpicture}  
    \caption{Obtaining an $(s-2,3)$-cycle from an $(s,9)$-cycle when $s>9$ in the proof of Lemma~\ref{lem:partialcomplement}.}\label{fig:reduce}
\end{figure}
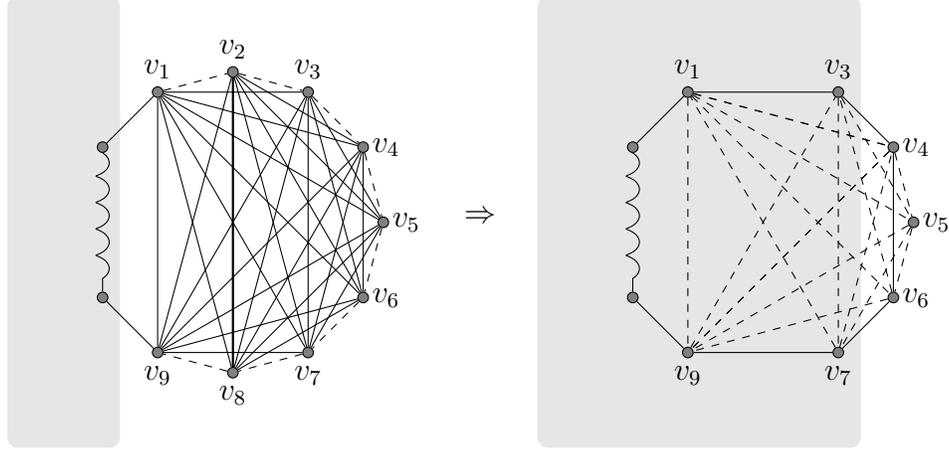
\begin{lemma}\label{lem:partialcomplement}
    Let     $s\ge t\ge 6$.
    An $(s,t)$-cycle contains a pivot-minor
    isomorphic to an $(s-2,t-6)$-cycle.
\end{lemma}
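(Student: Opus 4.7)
The plan is to exhibit an explicit edge whose pivot, followed by deletion of its two endpoints, produces the desired $(s-2,t-6)$-cycle. Label the cycle of the $(s,t)$-cycle as $v_1v_2\cdots v_sv_1$ with $X=\{v_1,\ldots,v_t\}$, so that the within-$X$ adjacency is the complement of the path $v_1v_2\cdots v_t$, i.e., $v_iv_j$ is an edge for $i,j\in[t]$ iff $|i-j|\ge 2$. The natural candidate is the edge $v_2v_{t-1}$: both endpoints lie in $X$, so pivoting touches only the complemented zone, and the symmetric positions of $v_2$ and $v_{t-1}$ will leave exactly $t-6$ ``core'' vertices between them and two ``border'' vertices on each side, yielding the right counts to land on an $(s-2,t-6)$-cycle.

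First I would compute the three standard pivot classes. Using the complement-of-path description, one obtains $N(v_2)=\{v_4,\ldots,v_t\}$ and $N(v_{t-1})=\{v_1,\ldots,v_{t-3}\}$ (no outside-$X$ contribution, since the original $C_s$-neighbors of $v_2$ and $v_{t-1}$ all lie in $X$). Hence $V_1:=N(v_2)\cap N(v_{t-1})=\{v_4,\ldots,v_{t-3}\}$, $V_2:=N(v_2)\setminus N(v_{t-1})=\{v_{t-2},v_t\}$, and $V_3:=N(v_{t-1})\setminus N(v_2)=\{v_1,v_3\}$. Pivoting $v_2v_{t-1}$ toggles every adjacency between distinct $V_i,V_j$ and preserves every other adjacency. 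Then I would delete $v_2$ and $v_{t-1}$ and order the surviving $s-2$ vertices cyclically as $v_1,v_3,v_4,\ldots,v_{t-2},v_t,v_{t+1},\ldots,v_s$.

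The second step is to verify that this graph is an $(s-2,t-6)$-cycle with distinguished set $X':=V_1=\{v_4,\ldots,v_{t-3}\}$. The verification is a routine case split. Within $V_1$ the adjacencies are untouched by the pivot and already form the complement of the path $v_4v_5\cdots v_{t-3}$, which is exactly the within-$X'$ structure required. The two ``bridging'' cycle edges $v_3v_4$ (in $V_3\times V_1$) and $v_{t-3}v_{t-2}$ (in $V_1\times V_2$) are created by the toggle, converting former non-edges to edges. The two cycle edges $v_1v_3$ and $v_{t-2}v_t$ survive unchanged within $V_3$ and within $V_2$ respectively. Every other pair across $V_2\times V_3$ or across $V_1\times(V_2\cup V_3)$ becomes a non-edge, which matches the non-consecutive positions in the new cyclic order. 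Finally, no edge incident to the outer arc $v_{t+1}\cdots v_s$ is disturbed by the pivot, so its contribution to the $C_{s-2}$-cycle structure is automatic.

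The main obstacle is purely bookkeeping: one must keep three different notions of adjacency aligned (in the original $(s,t)$-cycle, after pivoting, and in the target $(s-2,t-6)$-cycle) and confirm they agree on every surviving pair. There is no conceptual difficulty. The only small cases worth monitoring are the boundary $t=6$, where $V_1=\emptyset$ and the target degenerates to $C_{s-2}$ with $X'=\emptyset$, and the check that $v_2v_{t-1}$ really is an edge of the $(s,t)$-cycle, which holds because $|(t-1)-2|=t-3\ge 3$, so $v_2$ and $v_{t-1}$ are non-consecutive inside $X$ and thus adjacent after complementation.
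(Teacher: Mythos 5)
Your approach is the same as the paper's: pivot at $v_2v_{t-1}$ and delete both endpoints, which is precisely the operation used (and left largely to the reader) in the paper. The neighbourhood computation, the identification of $V_1,V_2,V_3$, and the choice $X'=V_1$ are all right, and the degenerate case $t=6$ is correctly flagged.

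There is, however, one small but real gap: the boundary case $s=t$, which is in fact the \emph{first} case the paper uses (in the proof of Proposition~\ref{prop:antihole}, $\overline{C_m}$ is an $(m,m)$-cycle). Your statement that within $X$ ``$v_iv_j$ is an edge for $i,j\in[t]$ iff $|i-j|\ge 2$'' assumes $C_s[X]$ is a path; when $s=t$ it is the whole cycle, so the extra cycle edge $v_1v_t$ gets complemented and $v_1v_t$ is a \emph{non}-edge of $C_s\oplus X$ even though $|1-t|\ge 2$. Consequently your sweeping claim that ``every other pair across $V_2\times V_3$ \ldots\ becomes a non-edge'' is false for the pair $\{v_1,v_t\}\subseteq V_3\times V_2$ when $s=t$: it was a non-edge before the pivot and is toggled \emph{to} an edge. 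Happily this is exactly what is needed, because when $s=t$ there is no outer arc and $v_t,v_1$ are consecutive in the new cyclic order, so $v_tv_1$ must be present in $C_{s-2}\oplus X'$. The lemma is therefore still correct, but your verification as written would report $v_1v_t$ as a non-edge and hence fail to close the cycle in the $s=t$ case; this case deserves to be added to your list of boundaries ``worth monitoring,'' alongside $t=6$. (A tiny additional nit: in the paper's convention $V_2=N(u)-N(v)$ and $V_3=N(v)-N(u)$ include the pivot endpoints $v$ and $u$ themselves; you omitted them, which is harmless here since both are deleted, but worth being precise about.)
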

\begin{proof}
Let $v_1$, $\ldots$, $v_s$ be the vertices of $C_s$ in the cyclic order where $X=\{v_1,\dots, v_t\}$.
Then it is easy to check that $(C_s \oplus X) \pivot v_2v_{t-1} -\{v_2, v_{t-1}\}$ is isomorphic to $C_{s-2} \oplus X'$ where $X$ consists of $t-6$ consecutive vertices on the cycle. See Figure~\ref{fig:reduce}.
\end{proof}

\begin{proof}[Proof of Proposition~\ref{prop:antihole}] 
As $\overline{C_m}$ is an $(m,m)$-cycle, by Lemma~\ref{lem:partialcomplement}, $\overline{C_m}$ contains a pivot-minor isomorphic to an $(m-2i,m-6i)$-cycle for all $ i \le m/6$.

    Let us fix $i=\lceil (k-2)/4\rceil $. 
    Then 
    $  
            m-6i \ge m-6\cdot (k+1)/4\ge 9/2
    $  
    and therefore $\overline{C_m}$ contains a pivot-minor $H$
    isomorphic to an $(m-2i,m-6i)$-cycle and $m-6i\ge 5$.
    We may assume that $H=C_{m-2i}\oplus X$ where $C_{m-2i}=v_1\cdots v_{m-2i}$ and $X=\{v_{4i+1},\dots,v_{m-2i}\}$.
    
    Note that $H$ contains an induced cycle $C = v_{1}\cdots v_{4i} v_{4i+1} v_{m-2i} v_1$ of length $4i+2\ge k$.
    If $k$ is even, then by Proposition~\ref{prop:cycle1}, $H$ contains a pivot-minor 
    isomorphic to $C_k$.
    So we may assume that $k$ is odd
    and therefore $\abs{V(C)}=4i+2\ge k+1$.
    
    Let $x= v_{m-2i}$, $y=v_{4i+1}$ be the two vertices in $V(C)\cap X$. Since $m-6i\ge 5$, there is a 
    common neighbor $z$ of $x$ and $y$ in $X$.
    Then $z$ has exactly two neighbors $x$ and $y$ in $V(C)$.
    Then $H[V(C)\cup \{z\}] \pivot yz -y-z$ is a cycle of length $4i+1$.
    Since  $4i+1\ge k$,
    by Proposition~\ref{prop:cycle1}, it contains a pivot-minor isomorphic to $C_k$.
\end{proof}

A \emph{generalized fan} is a graph~$G$ with a specified vertex $c$, called the \emph{center},
such that $G-c$ is an induced path of length at least $1$, called the \emph{main path} of $G$
and both ends of the main path are adjacent to $c$.
If $c$ is adjacent to all vertices of $G-c$, then $G$ is called a \emph{fan}. 

An \emph{interval} of a generalized fan with a center $c$ 
is a maximal subpath of the main path having no internal vertex adjacent to $c$. 
The \emph{length} of an interval is its number of edges.
A generalized fan is an \emph{$(a_1,\dots, a_s)$-fan} if the lengths of intervals are $a_1,\dots, a_s$ in order. Note that an $(a_1,\dots, a_s)$-fan is also an $(a_s,\dots, a_1)$-fan.
An $(a_1,\dots, a_s)$-fan is a \emph{$k$-good fan} if $a_1\geq k-2$ or $a_s\ge k-2$.
An $(a_1,\dots, a_s)$-fan is a \emph{strongly $k$-good fan} if 
$s\ge 2$ and 
either $a_1\ge k-2$ and $a_s$ is odd,
or $a_s\ge k-2$ and $a_1$ is odd.
It is easy to observe that every $k$-good fan has a hole of length at least $k$.
However, that does not necessarily lead to a pivot-minor isomorphic to $C_k$
because of the parity issues.
In the next proposition, we show that every strongly $k$-good fan has a pivot-minor isomorphic to $C_k$.

\begin{proposition}\label{prop:fan}
    Let $k\ge 5$ be an integer.
    Every strongly $k$-good fan has a pivot-minor isomorphic to $C_k$.
\end{proposition}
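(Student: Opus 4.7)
The plan is to induct on $|V(F)|$. Without loss of generality assume $a_1 \ge k-2$ and $a_s$ is odd (the other case in the definition of strongly $k$-good is symmetric under reversing the main path).

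If $a_1 \equiv k \pmod 2$, then the cycle $C_1 = cp_0p_1\cdots p_{a_1}c$ is an induced cycle of $F$ of length $a_1+2 \ge k$ and of the same parity as $k$. Deleting the vertices $p_{a_1+1},\dots,p_m$ leaves $C_1$ as an induced subgraph of $F$, and Proposition~\ref{prop:cycle1} produces $C_k$ as a pivot-minor.

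Suppose $a_1 \not\equiv k \pmod 2$, so $a_1 \ge k-1$. I will apply a sequence of pivots and deletions, each yielding a strongly $k$-good fan with strictly fewer vertices (so the induction hypothesis applies) or producing $C_k$ directly. The key reductions are: (a) \emph{interior reductions}, in which pivoting an edge between two consecutive internal vertices of any interval $a_j \ge 3$ and deleting those two vertices decreases $a_j$ by $2$ (preserving strong $k$-goodness); (b) the \emph{base} $s=2$ case, where pivoting $cp_m$ in the $(a_1,1)$-fan and deleting $c,p_m$ creates the edge $p_0p_{a_1}$, yielding an induced cycle of length $a_1+1 \equiv k \pmod 2$ and $\ge k$, to which Proposition~\ref{prop:cycle1} applies; and (c) a \emph{peeling reduction} for $s \ge 3$ with $a_s = 1$: if $a_{s-r+1} = \cdots = a_s = 1$ is the maximal trailing block of $1$'s and $a_{s-r} = 2$, then pivoting $cp_{i_{s-r+1}}$ and deleting $c,p_{i_{s-r+1}},\dots,p_m$ yields a strongly $k$-good $(a_1,\dots,a_{s-r-1},1)$-fan with $s-r$ intervals.

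Applying (a) first, I reduce to the case $a_1 = k-1$, $a_s = 1$, and $a_j \in \{1,2\}$ for every $2 \le j \le s-1$. If $s=2$, (b) finishes; otherwise (c) applies as long as some middle interval is $2$. The terminal configuration after all such peeling is the all-$1$ fan $(k-1, 1^{s-1})$, for which pivoting $cp_{i_2}$ adds the critical edge $p_0p_{k-1}$ while the remaining toggled edges are incident only to the trailing path vertices; deleting $c, p_{i_2}, p_{i_2+1}, \dots, p_m$ leaves the induced cycle $p_0p_1\cdots p_{k-1}p_0 \cong C_k$ on $\{p_0, \dots, p_{k-1}\}$. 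The main obstacle is verifying the peeling reduction (c) and the terminal-case computation by explicit tracking of the sets $V_1, V_2, V_3$ around the pivoted edge and confirming that, after the prescribed deletions, the remaining graph has exactly the claimed fan (or cycle) structure with no unwanted chords; since each reduction strictly decreases $|V(F)|$, the induction terminates.
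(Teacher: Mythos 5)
Your setup, your handling of interval $1$ (reduce $a_1$ down to $k-1$ via interior pivots when $a_1\not\equiv k\pmod 2$), your interior reduction (a), and your base case (b) are all correct. But the peeling reduction (c) and the terminal computation are both wrong, and the gap is genuine.

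\emph{Why (c) fails.} Write $i_j$ for the index of the start of interval $j$, so $p_{i_{s-r}+1}$ is the unique interior vertex of the length-$2$ interval preceding the trailing block of $1$'s. When you pivot $cp_{i_{s-r+1}}$, you get
\[
V_1=\{p_{i_{s-r+2}}\},\qquad V_2=\{p_{i_1},\ldots,p_{i_{s-r}}\}\cup\{p_{i_{s-r+3}},\ldots\},\qquad V_3=\{p_{i_{s-r}+1}\}.
\]
Crucially $V_3\neq\emptyset$. The $V_2$--$V_3$ toggle deletes the path edge $p_{i_{s-r}}p_{i_{s-r}+1}$ and creates edges from $p_{i_{s-r}+1}$ to \emph{every earlier} $p_{i_j}$ ($j<s-r$). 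After you delete $c,p_{i_{s-r+1}},\ldots,p_m$ the surviving graph is the path $p_0\cdots p_{i_{s-r}}$ together with a vertex $p_{i_{s-r}+1}$ adjacent to $p_{i_1},\ldots,p_{i_{s-r-1}}$ but not to $p_{i_{s-r}}$; this is not a generalized fan (one endpoint of the would-be main path fails to meet the would-be center). There is also a vertex-count mismatch: you delete $r+2$ vertices but the claimed $(a_1,\ldots,a_{s-r-1},1)$-fan has only $r+1$ fewer vertices than $G$.

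\emph{Why the terminal case fails.} In the $(k-1,1^{s-1})$-fan, pivoting $cp_{i_2}=cp_{k-1}$ gives $V_1=\{p_k\}$, $V_2=\{p_0,p_{k+1},\ldots,p_m\}$, $V_3=\{p_{k-2}\}$. The only toggled edge among $\{p_0,\ldots,p_{k-2}\}$ is $p_0p_{k-2}$. Since you delete $p_{k-1}$, what remains is the cycle $p_0p_1\cdots p_{k-2}p_0$ of length $k-1$ (wrong length \emph{and} wrong parity), not $C_k$ on $\{p_0,\ldots,p_{k-1}\}$; the statement that the critical edge is $p_0p_{k-1}$ is also incorrect because $p_{k-1}$ is the pivoted vertex and acquires no new edges. (Your base-case pivot $cp_m$ escapes all this only because there $V_3=\emptyset$, so exactly one edge toggles.)

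The observation you are missing, which the paper uses, is that \emph{if some middle interval $a_i$ ($1<i<s$) has odd length, the sub-fan on the first $i$ intervals is a smaller strongly $k$-good fan}, so induction applies. This rules out middle $1$'s entirely, forcing every middle interval to be even, hence equal to $2$ after step (a). The paper then peels off the last (length-$1$) interval by pivoting its path edge $xy$ rather than a $c$--path edge; since neither end of $xy$ lies in a $V_3$-like situation involving $c$, that pivot adds only the single edge $c\,p_{m-2}$ and cleanly recurses to the $(a_1,2,\ldots,2,1)$-fan with one fewer interval. You would need to adopt the truncation observation and switch your peeling pivot to a path edge (or to a $cp$ edge with $V_3=\emptyset$, as in your base case) for the argument to close.
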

\begin{proof}
    Let $G$ be an $(a_1,\dots, a_s)$-fan such that $s\ge 2$, $a_1\ge k-2$, and $a_s$ is odd.
    We proceed by the induction on $\abs{V(G)}$.
    We may assume that $G$ has no proper pivot-minor that is a strongly $k$-good fan.
    Note that $C_{a_1+2}$ is an induced subgraph of $G$, hence if $a_1\equiv k\pmod 2$, then $C_k$ is isomorphic to a pivot-minor of $G$ by Proposition~\ref{prop:cycle1}.
    Thus we may assume that $a_1\not\equiv k\pmod 2$ and so $a_1\geq k-1$.

    If $a_i$ is odd for some $1<i<s$, then $G$ contains a smaller strongly $k$-good fan by taking the first $i$ intervals, contradicting our assumption. Thus $a_i$ is even for all $1<i<s$.
    If $a_i\geq 3$ for some $i>1$, then let $uv$ be an internal edge of the $i$-th interval. Then $G\pivot uv-u-v$ is a strongly $k$-good fan, contradicting our assumption. 
    Thus, we may assume that $a_i\le 2$ for all $i>1$ and so 
    $G$ is an $(a_1,2,\dots,2,1)$-fan.

    Let $xy$ be the last interval of $G$ with length $1$.
    Then $G\pivot xy-x-y$ is a $(a_1,2,\dots,2,1)$-fan with $s-1$ intervals. 
    By the assumption, we may assume that $s=2$ and $G\pivot xy-x-y$ is an $(a_1-1)$-fan with one interval, which is a cycle with $a_1+1 $ edges. As $a_1+1\ge k$ and $a_1+1\equiv k\pmod 2$, Proposition~\ref{prop:cycle1} implies that $G$ contains a pivot-minor isomorphic to~$C_k$. 
\end{proof}

\section{Proof of Theorem~\ref{thm: Strong EH}}\label{sec:proof}

First we choose $\alpha>0$ and $\epsilon_0>0$ so that 
\begin{equation}\label{alpha}
    \begin{minipage}[c]{0.9\textwidth}
    $4\alpha\leq \alpha(\lceil\frac{3}{2}k+6\rceil)$ and $\epsilon_0=\epsilon(\lceil \frac{3}{2}k+6\rceil)$ where $\alpha(\cdot),\epsilon(\cdot)$ are specified in Lemma~\ref{lem: one direction} 
    \end{minipage}
\end{equation}
and in addition $\alpha<1/(8k)$ as well.
Let $\delta>0$ be a constant obtained by applying Corollary~\ref{cor:rodl} with $\alpha/3$ as $\alpha$ and $C_k$ as $H$.
Choose $\epsilon>0$ so that 
\[
    \epsilon < \min\left(
    \frac{\delta}{12},
    (1-4(k+3)\alpha)\frac{\delta}{240},
    \frac{\epsilon_0\delta}{12}\right).
\]

Let $n>1$ be an integer and $G$ be an $n$-vertex graph with no pivot-minor isomorphic to~$C_k$. In particular, $G$ does not have $C_k$ as  an induced subgraph.
To derive a contradiction, we assume that
$G$ contains no pure pair $(A,B)$ with $\abs{A},\abs{B}\ge \epsilon n$.
We may assume that $\epsilon n>1$, because otherwise an edge or a non-edge of $G$ gives 
a pure pair. 

By Corollary~\ref{cor:rodl}, 
there exists a subset $U$ of $V(G)$ 
such that $\abs{U}\ge \delta \abs{V(G)}$
and $\Delta(G^0)[U])\le (\alpha/3) \abs{U}$
for some $G^0\in\{G,\overline G\}$.
By the assumption on $G$, 
$G^0[U]$ has no pure pair $(A,B)$ with $\abs{A},\abs{B}\ge (\epsilon/\delta)\abs{U}$.
As $\epsilon/\delta<1/3$, by Lemma~\ref{lem:conn}, 
$G^0[U]$ has a connected induced subgraph $G'$ such that
$\abs{V(G')}\ge \abs{U}/3$.
Let $n'=\abs{V(G')}$.

Then $n'\ge (\delta/3)n$ and $\Delta(G')\le (\alpha/3) \abs{U}\le \alpha n'$.
By the assumption on $G$, 
\begin{equation}\label{eq: basic assump2}
    \text{$G'$ contains no 
    pure pair $(A,B)$
    with $\abs{A},\abs{B}\ge (3\epsilon/\delta)n'$.
    }
\end{equation}

By applying Lemma~\ref{lem: tree} with $G'$, we obtain a dominating induced tree $T$ and $r:V(G')\rightarrow V(T)$ satisfying Lemma~\ref{lem: tree}~(T1)--(T2) with $G'$.
For each $u \in V(T)$, let 
\[w(u):=\frac{\abs{r^{-1}(\{u\})}}{n'}\] be the \emph{weight} of $u$. 
By applying Lemma~\ref{lem: tree path weight} with the weight $w$, we obtain either an induced path $P$ of $T$ with weight at least $1/4$ or two unrelated sets $A$ and $B$ both with weight at least $1/4$.

In the latter case, Lemma~\ref{lem: tree}~(T2) implies that
$r^{-1}(A)$ is anticomplete to $r^{-1}(B)$ in $G'$
and $\abs{r^{-1}(A)}, \abs{r^{-1}(B)}\ge n'/4\ge (3\epsilon/\delta)n'$, 
contradicting~\eqref{eq: basic assump2}.

Hence, there exists an induced path $P$ in $G'$ with $|V(P)\cup N_{G'}(V(P))| \geq n'/4$. Let $W:=V(P)\cup N_{G'}(V(P))$. 
Note that $n'/4\ge \delta n / 12> \epsilon n > 1$ and so $\abs{W}\ge 2$.

Suppose that $G'$ is an induced subgraph of $\overline{G}$. 
Using \eqref{alpha},  we apply Lemma~\ref{lem: one direction} to $G'[W]$ with $4\alpha$ as $\alpha$ and $\lceil \frac{3}{2}k+6 \rceil$ as $k$.
Then we can deduce from~\eqref{eq: basic assump2}
and $\epsilon' \abs{W} \ge \frac{12\epsilon}{\delta}\frac{n'}4=(3\epsilon/\delta)n'$
that  
the graph $G'[W]$ contains an induced cycle $C_m$  with $m\geq \lceil \frac{3}{2}k+6\rceil$
and by Proposition~\ref{prop:antihole}, $\overline{G'}$ contains a pivot-minor isomorphic to $C_k$, and so does $G$, a contradiction. 

Thus $G'$ is an induced subgraph of $G$. 
Let $G^*:= G'[W]$ and let $n^*=\abs{W}$.
Then $G^*$ has no pivot-minor isomorphic to $C_k$, $n^*\ge n'/4$,
and 
$\Delta(G^*)\le 4\alpha n^*$.
By~\eqref{eq: basic assump2}, $G^*$ contains no 
pure pair $(A,B)$ with $\abs{A},\abs{B}\ge (12\epsilon/\delta)n^*$.
Now the theorem follows from applying the 
following lemma with $G^*, n^*, 4\alpha, 12\epsilon/\delta$ playing the roles of $G, n, \alpha, \epsilon$ respectively in the statement of the lemma.

\begin{lemma}\label{lem: main}
    Let $k\ge 3$ be an integer.
    Let $0<\alpha<1/(2k)$, 
    $0<\epsilon\le (1-(k+3)\alpha)/20$.
    Let $G$ be a graph on $n\ge 2$ vertices such that
    $\Delta(G)\le \alpha n$
    and 
    $G$ has no pure pair $(A,B)$ with 
    $\abs{A},\abs{B}\ge \epsilon n$.
    If $G$ has a dominating induced path $P$, then 
    $G$ has a pivot-minor isomorphic to $C_k$.
\end{lemma}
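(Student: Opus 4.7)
Write the dominating induced path as $P=p_1 p_2\cdots p_\ell$. Since $P$ dominates $V(G)$ and $\Delta(G)\le\alpha n$, a double count of edges between $V(P)$ and $V(G)\setminus V(P)$ gives $n-\ell\le\ell\alpha n$, so $\ell\ge n/(1+\alpha n)\ge 1/(2\alpha)>k$. For each external vertex $v\in V(G)\setminus V(P)$, let $a(v)\le b(v)$ be the smallest and largest positions with $p_{a(v)},p_{b(v)}\in N(v)$, and call $b(v)-a(v)$ the \emph{spread} of $v$. The strategy is to dichotomize on spread and either build a $C_k$ pivot-minor using the tools of Section~\ref{sec:cycle}, or else extract a pure pair of size $\ge\varepsilon n$, contradicting the hypothesis.

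In the small-spread case, where many external vertices have spread at most some threshold $L=L(k)$, I would tile $V(P)$ into consecutive windows of length $L$ and assign each such $v$ to the window containing $p_{a(v)}$. A two-stage pigeonhole, combined with the degree bound $\Delta(G)\le\alpha n$ which controls external--external cross edges by at most $|X_j|\alpha n$, isolates two windows whose associated external vertex classes form an anticomplete pair of size $\ge\varepsilon n$, contradicting hypothesis. In the large-spread case, there is some external $v$ with $b(v)-a(v)\ge L$. Writing $N(v)\cap V(P)=\{p_{i_1},\dots,p_{i_s}\}$ with $i_1<\cdots<i_s$, the induced subgraph on $\{v\}\cup V(P[i_1,i_s])$ is a generalized fan centered at $v$ with intervals of lengths $d_j=i_{j+1}-i_j$. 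If some $d_j\ge k-2$ and another $d_{j'}$ is odd, Proposition~\ref{prop:fan} yields a $C_k$ pivot-minor via a strongly $k$-good fan; if $s=2$ (a single induced cycle through $v$) and its length is $\equiv k\pmod 2$, Proposition~\ref{prop:cycle1} applies directly. Otherwise I would introduce a second external vertex $w$ whose attachment splices with $v$'s to flip parity---for instance, by inserting $w$ along an interval to convert an even interval into an odd one---thereby producing a strongly $k$-good fan or a cycle of length $\equiv k\pmod 2$. Proposition~\ref{prop:antihole} provides an alternative route if an induced $\overline{C_m}$ appears on $V(P)$ together with a handful of external vertices. The existence of a parity-correcting $w$ is ensured by the no-pure-pair hypothesis, which prevents external vertices from clustering too narrowly.

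\textbf{Main obstacle.} The crux is parity. Spread readily produces long fans and cycles, but matching their lengths to $k\pmod 2$---required by both Proposition~\ref{prop:cycle1} and Proposition~\ref{prop:fan}---may fail for any single external vertex. The heart of the argument is the splicing step that combines two external vertices so as to shift parity. The numerical constant $(k+3)\alpha$ in the bound on $\varepsilon$ should arise from the pigeonhole budget needed to guarantee that such a parity-correcting pair exists, or else to extract a pure pair of linear size as a direct contradiction.
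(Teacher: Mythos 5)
Your high-level instincts are right: a dominating induced path, generalized fans for parity control, Propositions~\ref{prop:cycle1} and~\ref{prop:fan} as the pivot-minor engines, and the degree bound $\Delta(G)\le\alpha n$ to extract pure pairs from localized structure. You also correctly identify parity as the crux. But the argument as written has a genuine gap precisely at the step you flag as the ``main obstacle,'' and your proposed fix (the ``splicing'' of a second external vertex $w$ to flip parity) is not specified in a way that would go through; it is in fact the entire content of the lemma.

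Two concrete problems. First, a single external vertex $v$ of large spread may have a fan whose intervals are \emph{all} short (length $1$ or $2$): then neither Proposition~\ref{prop:cycle1} nor Proposition~\ref{prop:fan} applies, and no amount of choosing one $v$ will help. Your proposal to ``insert $w$ along an interval to convert an even interval into an odd one'' requires controlling precisely which path-neighbors $w$ has and how they interact with $v$'s, and you give no mechanism for finding such a $w$. Second, the small-spread/large-spread dichotomy is not clean: spreads are real numbers, external vertices with spread close to your threshold $L$ straddle windows, and the anticompleteness argument between window-classes $X_j$ needs care because $X_j$ and $X_{j'}$ may have edges between them and your degree-based bound $|X_j|\alpha n$ does not by itself yield an anticomplete \emph{pair} of the required size.

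The paper takes a different structural route that resolves all of this at once. Rather than dichotomize on spread, it performs a \emph{sweep}: for each position $i$ it looks at the width-$k$ window $U_i^0=\{i,\dots,i+k-1\}$ and partitions external vertices into $A_i$ (those meeting $U_i^0$), $B_i$ (those meeting both sides $U_i^-,U_i^+$ but not $U_i^0$), and one-sided vertices, which are further split by the \emph{parity} of their extreme path-neighbor into $C_i^1,C_i^2$ (left side) and $D_i^1,D_i^2$ (right side). The parity split is designed so that $C_i^j$ is anticomplete to $D_i^j$ (a cross edge would give an induced cycle of the right parity). The straddling vertices $B_i$ are shown to have size $<2(\alpha+2\epsilon)n$ via a component argument: Claim~\ref{cl:parity} forces rigid parity structure on their path-neighborhoods, and edges inside a parity class of $B_i$ force equal endpoints, so components are small and a pure pair follows. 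A discrete intermediate-value argument on $f(i)=|C_i^1|+|C_i^2|$ then locates $i_*$ making both a $C_{i_*}^{j_*}$ and a $D_{i_*}^{3-j_*}$ have size $\ge 3\epsilon n$. The final step (Claim~\ref{eq: complete}) shows that every component of $G[C_{i_*}^{j_*}]$ and every component of $G[D_{i_*}^{3-j_*}]$ form a pure pair, using two vertices $v,v'$ inside one component with $uv\in E$, $uv'\notin E$ --- this is the closest analogue of your ``splicing'' idea, but here it works because the parity bookkeeping has already been done and the two vertices are guaranteed to lie in the same prescribed parity class. Without this setup, the splicing step you gesture at does not have the inputs it needs, so the gap remains.
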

\begin{proof}
    Suppose that $G$ has no pivot-minor isomorphic to $C_k$. 
    Note that $\epsilon n>1$ as otherwise we have a pure pair on two vertices since $n\ge 2$.
    Let us label vertices of $P$ by $1$, $2$, $\ldots$, $s$ in the order.

    As $P$ is a dominating path of $G$ and $1\le \Delta(G) \leq \alpha n$, we have 
    $2\alpha n s\ge (\alpha n+1) s \ge n$ and therefore 
    \[
        s\ge 1/(2\alpha).
    \]
    Note that $s-k>0$ because $\alpha<\frac{1}{2k}$.
    As $P$ is an induced path, it contains 
    a pure pair $(A,B)$ with $\abs{A},\abs{B}\ge \lfloor \frac{s-1}{2}\rfloor$ and so
    $\frac{s-2}2\le \lfloor \frac{s-1}{2}\rfloor<\epsilon n$.
    Because $\epsilon n>1$, we have $2\epsilon n +2< 4\epsilon  n $ and so 
    \begin{equation}\label{eq: s size}
        s < 2 \epsilon  n  +2  < 4\epsilon  n .
    \end{equation}

Now, for each $i\in [s-k+1]$, let
\[
U_i^{-} :=\{ 1,\dots, i-1\}, \enspace U_i^0:= \{ i,\dots, i+k-1\}, \enspace \text{and} \enspace
U_i^+ := \{i+k,\dots, s\}.
\]
In other words, this partitions $P$ into three (possibly empty) subpaths.
Furthermore, for all $i\in [s-k+1]$ and $u\in N_G(U_{i}^-)-V(P)$, let
\[
    m_i^-(u):= \max ( N_{G }(u)\cap U_i^-) 
\]
and for all  $i\in [s-k+1]$ and $u\in N_G(U_{i}^+)-V(P)$, let 
\[
    m_i^+(u):= \min ( N_{G }(u)\cap U_i^+),
\]
indicating the largest neighbor of $u$ in $U^-_i$ and the smallest neighbor of $u$ in $U^+_i$ respectively.
For each $i\in [s-k+1]$, let
\begin{align*}
    A_i &:= N_{G}( U_i^0)- V(P) \text{ and }  \\
    B_i & := (N_G(U_i^-)\cap N_G(U_i^+))- (A_i\cup V(P)).
\end{align*}
Note that for each $u\in B_i$, we have
\begin{equation}\label{eq: B parity}
m^+_i(u) - m^-_i(u) \not\equiv k\pmod 2,
\end{equation}
because otherwise $(u, m^-_i(u),m^-_i(u)+1,\dots, m^+_i(u),u)$ forms an induced cycle of length at least $k$ and 
Proposition~\ref{prop:cycle1} implies that $G$ contains a pivot-minor isomorphic to $C_k$, a contradiction.

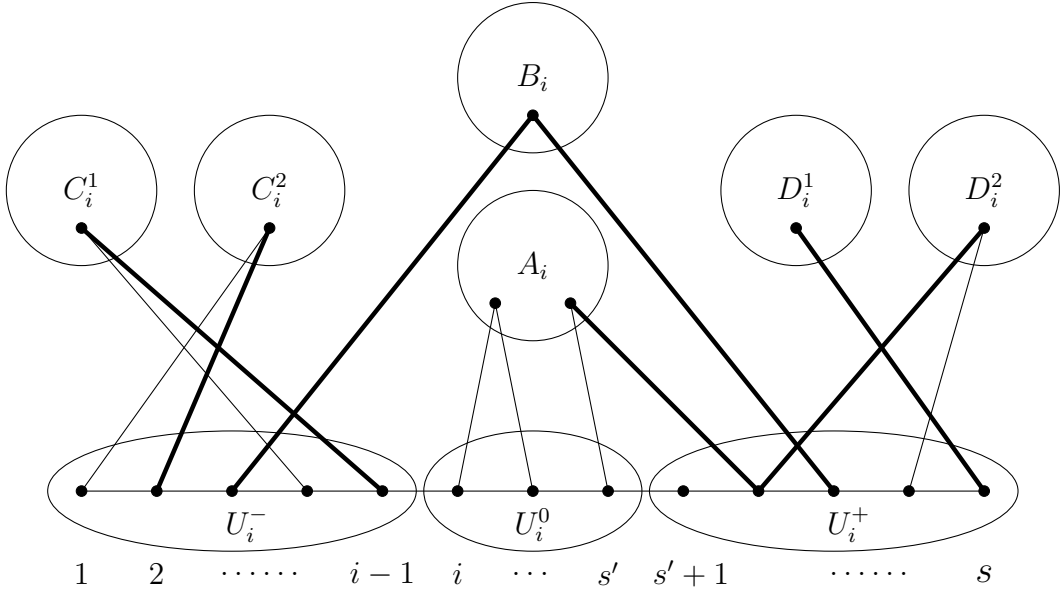
\begin{figure}\label{fig:1}
\centering
\begin{tikzpicture}

\draw[fill=none] (3,0) ellipse [x radius=2.45,y radius=0.8];

\node at (3.2,-0.45) { $U_i^-$};

\draw[fill=none] (7,0) ellipse [x radius=1.45,y radius=0.8];

\node at (7,-0.45) { $U_i^0$};

\draw[fill=none] (11,0) ellipse [x radius=2.45,y radius=0.8];

\node at (11.2,-0.45) { $U_i^+$};

\draw[fill=none] (7,3) ellipse [x radius=1,y radius=1];
\node at (7,3) { $A_i$};
\filldraw[fill=black] (7.5, 2.5) circle (2pt);
\draw (7.5,2.5) to (8,0);
\draw [line width=0.6mm](7.5,2.5) to (10,0);
\filldraw[fill=black] (6.5, 2.5) circle (2pt);
\draw (6.5,2.5) to (7,0);
\draw (6.5,2.5) to (6,0);

\draw[fill=none] (7,5.5) ellipse [x radius=1,y radius=1];
\node at (7,5.5) { $B_i$};
\filldraw[fill=black] (7, 5) circle (2pt);
\draw [line width=0.6mm](7, 5) to (11,0);
\draw [line width=0.6mm](7, 5) to (3,0);

\draw[fill=none] (1,4) ellipse [x radius=1,y radius=1];
\node at (1,4) { $C^1_i$};

\draw[fill=none] (3.5,4) ellipse [x radius=1,y radius=1];
\node at (3.5,4) { $C^2_i$};

\filldraw[fill=black] (1, 3.5) circle (2pt);
\draw[line width=0.6mm] (1, 3.5) to (5,0);
\draw (1, 3.5) to (4,0);

\filldraw[fill=black] (3.5, 3.5) circle (2pt);
\draw (3.5, 3.5) to (1,0);
\draw[line width=0.6mm] (3.5, 3.5) to (2,0);

\draw[fill=none] (10.5,4) ellipse [x radius=1,y radius=1];
\node at (10.5,4) { $D^1_i$};

\draw[fill=none] (13,4) ellipse [x radius=1,y radius=1];
\node at (13,4) { $D^2_i$};

\filldraw[fill=black] (10.5, 3.5) circle (2pt);
\draw[line width=0.6mm] (10.5, 3.5) to (13,0);

\filldraw[fill=black] (13, 3.5) circle (2pt);
\draw[line width=0.6mm] (13, 3.5) to (10,0);
\draw (13, 3.5) to (12,0);

\filldraw[fill=black] (1,0) circle (2pt);
\filldraw[fill=black] (2,0) circle (2pt);
\filldraw[fill=black] (3,0) circle (2pt);
\filldraw[fill=black] (4,0) circle (2pt);
\filldraw[fill=black] (5,0) circle (2pt);
\filldraw[fill=black] (6,0) circle (2pt);
\filldraw[fill=black] (7,0) circle (2pt);
\filldraw[fill=black] (8,0) circle (2pt);
\filldraw[fill=black] (9,0) circle (2pt);
\filldraw[fill=black] (10,0) circle (2pt);
\filldraw[fill=black] (11,0) circle (2pt);
\filldraw[fill=black] (12,0) circle (2pt);
\filldraw[fill=black] (13,0) circle (2pt);

\draw (1,0) to (13,0);

\node at (1,-1.1) { $1$};
\node at (2,-1.1) { $2$};
\node at (3.4,-1.1) { $\dots\dots$};
\node at (5,-1.1) { $i-1$};
\node at (6,-1.1) { $i$};
\node at (7,-1.1) { $\dots$};
\node at (8,-1.1) { $s'$};
\node at (9.1,-1.1) { $s'+1$};
\node at (11.5,-1.1) { $\dots\dots$};

\node at (13,-1.1) {\large $s$};

\end{tikzpicture}
\caption{$s'=i+k-1$. Bold lines indicate $m_i^-(u)$ and $m_i^+(u)$.}
\end{figure}

For each $i\in [s-k+1]$, let
\begin{align*}
C_i^{1} &:= \{ u\in N_{G}(U_i^{-})- (A_i\cup B_i\cup V(P)) : m_i^-(u)\equiv1\pmod2\}, \\
C_i^{2} &:= \{ u\in N_{G}(U_i^{-})- (A_i\cup B_i\cup V(P)) : m_i^-(u)\equiv0\pmod2\}, \\
D_i^{1} &:= \{ u\in N_{G}(U_i^{+})- (A_i\cup B_i\cup V(P)) : m_i^+(u) \equiv k \pmod2\}, \enspace \text{and} \\
D_i^{2} &:= \{ u\in N_{G}(U_i^{+})- (A_i\cup B_i\cup V(P)) : m_i^+(u) \equiv k +1\pmod2\}.
\end{align*}
Recall that $P$ is dominating. Hence, for each $i$, the sets $\{A_i, B_i, C_i^1, C_i^2, D_i^1, D_i^2, V(P)\}$ forms a partition of $V(G)$ into $7$ possibly empty sets.

If there exists an edge between $u\in C_i^j$ and $v\in D_i^j$ for some $j\in [2]$, then we obtain an induced cycle $(u, m_i^-(u), m_i^-(u)+1,\dots, m_i^+(v), v,u)$ having length $m_i^+(v)-m_i^-(u)+3>k$ and 
$m_i^+(v)-m_i^-(u)+3\equiv k \pmod 2$,
contradicting our assumption that $G$ has no pivot-minor isomorphic to $C_k$ by Proposition~\ref{prop:cycle1}.
Thus $C_i^j$ is anticomplete to $D_i^j$. Hence, 
\begin{equation}\label{eq: min CD}
    \min\{ |C_i^{j}|, |D_i^{j}| \} <\epsilon n.
\end{equation}
for all $i\in [s-k+1]$ and $j\in[2]$. 
Furthermore, we prove the following.
\begin{claim}\label{cl:parity}
    Let $i\in [s-k+1]$.
    For each $v\in B_i$, 
    all integers in $N_G(v)\cap U_{i}^-$ have the same parity
    and all integers in $N_G(v)\cap U_{i}^+$ have the same parity. 
\end{claim}
\begin{subproof}[Proof of Claim~\ref{cl:parity}]
    If $N_G(v)\cap U_{i}^+$ has two integers $a<b$ of the different parity, then $G$ contains a strongly $k$-good generalized fan by taking a subpath of $P$ from $m_i^-(v)$ to $b$ as its main path and $v$ as its center. 
    Then by Proposition~\ref{prop:fan},
    $G$ contains a pivot-minor isomorphic to $C_k$, contradicting the assumption.
    Thus all integers in $N_G(v)\cap U_i^+$ have the same parity and similarly all integers in $N_G(v)\cap U_i^-$ have the same parity.
\end{subproof}
\begin{claim}\label{cl:b}
    For all $i\in [s-k+1]$, 
    $|B_i|<2(\alpha +2\epsilon )n$.
\end{claim}
\begin{subproof}[Proof of Claim~\ref{cl:b}]
Suppose $|B_i|\geq 2(\alpha +2\epsilon ) n $ for some $i\in[s-k+1]$.
Then there exists $r_B\in\{0,1\}$ such that 
\[
    B':=\{ u \in B_i : m^-_i (u)\equiv r_B\pmod2\}
\] 
has size at least $(\alpha +2\epsilon ) n $.
By~\eqref{eq: B parity}, $m^+_i(u)\equiv k+r_B+1\pmod 2$ for all $u\in B'$. 

We claim that if $uv$ is an edge in $G [B']$, then $(m^-_i(u), m^+_i(u)) = (m^-_i(v), m^+_i(v))$. 
Suppose not. Without loss of generality, we may assume that $m^-_i(u) < m_i^-(v)$, because otherwise we may reverse the ordering of $P$ to ensure that $m^-_i(u)\neq m^-(v)$ and swap $u$ and $v$ if necessary.

If %
$m^+_i(u)\ge m^+_i(v)$, 
then by Claim~\ref{cl:parity}, 
$\{m^-_i(v), m^-_i(v)+1,\dots, m^+_i(u),u,v\}$ induces a strongly $k$-good generalized fan with $v$ as a center
and $(m_i^-(v),m_i^-(v)+1,\dots, m_i^+(u),u)$ as its main path. 
This implies that $G$ has a pivot-minor isomorphic to $C_k$ by Proposition~\ref{prop:fan}, contradicting our assumption.

If $m^+_i(u)< m^+_i(v)$, then $(m^-_i(v),m^-_i(v)+1,\dots, m^+_i(u),u,v)$ is an induced cycle of length $m^+_i(u)-m^-_i(v) +3\ge k$, and 
$m^+_i(u)-m^-_i(v) +3\equiv (k+r_B+1)-r_B+3 \equiv k\pmod2$, a contradiction by Proposition~\ref{prop:cycle1}. 

\medskip
Hence, $(m^-_i(u), m^+_i(u))=(m^-_i(v), m^+_i(v))$ for all $uv\in E(G [B'])$.
Let $C_1$, $\ldots$, $C_t$ be the connected components of $G [B']$. By the above observation, for each $j\in [t]$, there exist $a_j \in U^-_i$ and $b_j\in U^+_i $ such that $V(C_j) \subseteq N_{G }(a_j)\cap N_{G }(b_j)$. So, $\abs{V(C_j)}\leq \alpha  n $.
As $|B'|\geq (\alpha +2\epsilon ) n $, 
there exists a set $I\subseteq\{1,2,\ldots,t\}$ such that 
$\epsilon  n \le \abs{\bigcup_{i\in I}V(C_i)}\le (\alpha +\epsilon )n $.
Let $A:=\bigcup_{i\in I}V(C_i)$ and $B:=B'- A$.
Then $(A,B)$ is a pure pair of $G$ with $\abs{A},\abs{B}\ge \epsilon  n $, a contradiction. 
\end{subproof}
\begin{claim}\label{cl: CC and DD large}
There exist $i_* \in [s-k+1]$ and $j_*\in [2]$ such that 
\[
    |C_{i_*}^{j_*}|, |D_{i_*}^{3-j_*}| \ge 3\epsilon n.
\]
\end{claim}
\begin{subproof}[Proof of Claim~\ref{cl: CC and DD large}]
First, since $\Delta(G )\le \alpha n $, 
$|A_i|\leq k\alpha  n $ for each $i\in [s-k+1]$.

Let $f(i):= \abs{C_i^1}+ \abs{C_i^2}$. Then 
\begin{align*}
    f(1)&=0,\\
    f(s-k+1)&=n - \abs{A_{s-k+1}} - s  \quad\text{because $U_{s-k+1}^+=D_{s-k+1}^1=D_{s-k+1}^2=B_{s-k+1}=\emptyset$,}
    \\
    &\ge n - k \alpha   n  - 4\epsilon  n  \quad\text{ by \eqref{eq: s size}  and the assumption that $\Delta(G)\leq \alpha n$, }\\
    &= (1-k\alpha -4\epsilon )n \ge 6\epsilon n,
\intertext{and for each $i\in [s-k]$, we have }
    f(i+1)-f(i) &\le \deg_{G }(i)\leq \alpha  n .
\end{align*}
Hence, there exists $i_*\in [s-k+1]$ such that $6\epsilon  n  \le f(i_*) < (6\epsilon +\alpha )n $. 
As $|B_{i_*}|< 2(\alpha +2\epsilon )n $, we have 
\begin{align*}
    \abs{D_{i_*}^1}+\abs{D_{i_*}^2}
    &=n-\abs{A_{i_*}}-\abs{B_{i_*}}
    -(\abs{C_{i_*}^1}+\abs{C_{i_*}^2})-\abs{V(P)}\\
    &\ge n - k\alpha  n  
    - 2(\alpha +2\epsilon )n 
    -(6\epsilon +\alpha )n 
    -4\epsilon n \\
    &= (1-(k+3)\alpha -14\epsilon )n 
    \ge 6\epsilon n .
\end{align*}
So, there exist $a, b\in \{1,2\}$ such that
$\abs{C_{i_* }^a}, \abs{D_{i_*}^b}\ge  3\epsilon  n $.
By~\eqref{eq: min CD}, $a\neq b$ and so we take $j_*:=a$. This proves the claim.
\end{subproof}

\begin{claim}\label{eq: complete}
    For each component $C$ of $G [C_{i_*}^{j_*}]$
    and each component $D$ of $G [D_{i_*}^{3-j_*}]$, 
    $(C, D)$ is a pure pair of $G $.
\end{claim}
\begin{subproof}[Proof of Claim~\ref{eq: complete}]
    Assume not. 
    By symmetry, we may assume that $C$ has a vertex $u$ having both a neighbor and a non-neighbor in $D$, because otherwise we swap $C$ and $D$ by reversing the order of $P$.
    As $D$ is connected, there exist $v,v'\in V(D)$ such that $uv,vv'\in E(G )$ and $uv'\notin E(G )$.

    Note that $m^+_{ i_*}(v)\equiv m^+_{ i_*}(v')\pmod 2$ and 
    \begin{equation}\label{eq: parity same}
        \text{\emph{
        for every neighbor $\ell \in N_{G }(v)\cap U^+_{ i_*}$, the number $\ell - m_{ i_*}^+(v)$ is even,
        }}
    \end{equation} 
    because otherwise for the minimum $\ell\in N_{G }(v)\cap U_{i^*}^+$ with odd $\ell - m_{ i_*}^+(v)$,
    a vertex set $\{v,m^-_{ i_*}(u), m^-_{ i_*}(u)+1,\dots, \ell,u\}$
    induces a strongly $k$-good generalized fan with $v$ as its center,
    a contradiction by Proposition~\ref{prop:fan}.

    If $m^+_{ i_*}(v) \leq m^+_{ i_*}(v')$, then 
    $\{v,u,m^-_{ i_*}(u),m^-_{ i_*}(u)+1,\dots, m^+_{i_*}(v'),v'\}$ induces a strongly $k$-good generalized fan with $v$ as a center by~\eqref{eq: parity same}.

    If $m^+_{ i_*}(v) > m^+_{ i_*}(v')$, then simply 
    $(u, m^-_{ i_*}(u), m^-_{ i_*}(u)+1,\dots, m^+_{ i_*}(v'), v', v,u )$ is an induced cycle 
    whose length is at least $k$ and is of the same parity with $k$. Hence Proposition~\ref{prop:cycle1} implies a contradiction. 
\end{subproof}

By Claim~\ref{eq: complete},
there exists $S\in \{C_{i_*}^{j_*},D_{i_*}^{3-j_*}\}$ such that 
every component of $G [S]$ has less than $\epsilon  n $ vertices.
By Claim~\ref{cl: CC and DD large},
we can greedily find a set of components of $G [S]$ covering 
at least $\epsilon  n $ vertices and at most $2\epsilon  n $ vertices.
Since $\abs{S}\ge 3\epsilon n $, 
the vertices of $S$ covered by this set of components
with the vertices of $S$ not covered by this set of components
give a pure pair $(A,B)$ with $\abs{A},\abs{B}\ge \epsilon  n $, a contradiction. 
This proves the lemma.
\end{proof}

\section{Discussions}\label{sec:discussion}
For a graph $G$, we write $\chi(G)$ to denote its chromatic number 
and $\omega(G)$ to denote its clique number, that is the maximum size of a clique.
A class $\mathcal G$ of graphs is called \emph{$\chi$-bounded} if 
there exists a function $f:\mathbb Z\to \mathbb Z$ such that
for every induced subgraph $H$ of a graph in $\mathcal G$, 
$\chi(H)\le f(\omega(H))$. 
In addition, we say $\mathcal G$ is \emph{polynomially $\chi$-bounded}
if $f$ can be taken as a polynomial.

Every polynomially $\chi$-bounded class of graphs 
has the strong Erd\H{o}s-Hajnal property, 
but the converse does not hold; 
see the survey paper by Scott and Seymour~\cite{SS2018}.
So it is natural to ask whether the class of graphs with no pivot-minor isomorphic to $C_k$ is polynomially $\chi$-bounded, which is still open. 
So far Choi, Kwon, and Oum~\cite{CKO2015} showed that 
it is $\chi$-bounded.
\begin{theorem}[Choi, Kwon, and Oum~{\cite[Theorem 4.1]{CKO2015}}]\label{thm:cko}
    For each $k\ge 3$, the class of graphs
    with no pivot-minor isomorphic to $C_k$ is
    $\chi$-bounded.
\end{theorem}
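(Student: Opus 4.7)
The plan is to prove $\chi$-boundedness by induction on $k$, reducing exclusion of $C_k$ as a pivot-minor to exclusion of $C_{k-2}$ via a BFS layering. The structural lemmas from Section~\ref{sec:cycle}, namely Propositions~\ref{prop:cycle1}, \ref{prop:antihole}, and \ref{prop:fan}, supply the machinery for constructing $C_k$ as a pivot-minor out of long cycles, anti-holes, and strongly $k$-good fans respectively.

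For the base cases, when $k=3$ the fact that pivoting preserves bipartiteness (since for an edge $uv$ of a bipartite graph with parts $(A,B)$ one has $V_1=\emptyset$, $V_2\subseteq B$, $V_3\subseteq A$, so toggling only affects edges between $A$ and $B$) implies that a graph has no $C_3$ pivot-minor if and only if it is bipartite, giving $\chi\le 2$. When $k=4$, Proposition~\ref{prop:cycle1} shows that every induced even cycle of length at least $4$ contains $C_4$ as a pivot-minor, so the class lies inside the class of even-hole-free graphs, which satisfies $\chi\le 2\omega-1$ by the theorem of Addario-Berry, Chudnovsky, Havet, Reed, and Seymour.

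For the inductive step with $k\ge 5$, assume the result for $k-2$ with bounding function $f_{k-2}$. Given $G$ with no pivot-minor isomorphic to $C_k$, apply BFS from an arbitrary vertex $v_0$ to obtain layers $L_0,L_1,\dots$. The central claim is that each $G[L_i]$ has no pivot-minor isomorphic to $C_{k-2}$: a hypothetical such pivot-minor, produced by a sequence $\sigma$ of pivots followed by deletions within $L_i$, could be extended via two carefully chosen vertices $w_1,w_2\in L_{i-1}$ to produce either an induced cycle of length $k$, an induced anti-hole of length at least $\lceil \tfrac{3}{2}k+6\rceil$, or a strongly $k$-good fan inside $\sigma(G)$. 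In any case, Propositions~\ref{prop:cycle1}, \ref{prop:antihole}, and \ref{prop:fan} exhibit $C_k$ as a pivot-minor of $G$, contradicting our hypothesis. The induction hypothesis then gives $\chi(G[L_i])\le f_{k-2}(\omega(G))$, and $\chi(G)\le 2f_{k-2}(\omega(G))$ follows by coloring the parity classes of BFS layers with disjoint palettes, setting $f_k(\omega)=2f_{k-2}(\omega)$.

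The main obstacle is rigorously producing the two-vertex extension. Applying the pivot sequence $\sigma$ to $G$, rather than to $G[L_i]$ alone, also toggles edges within $L_{i-1}$ and edges between $L_i$ and $L_{i-1}$, because each pivot at an edge $uv\in L_i$ complements adjacencies across $N_G(u)\cup N_G(v)$, which meets $L_{i-1}$. Consequently the original BFS parents of the cycle vertices may no longer be adjacent to them in $\sigma(G)$, so a naive extension fails. Resolving this will likely require a Ramsey-type cleaning of $L_{i-1}$ that standardizes the back-neighborhoods of the cycle vertices, followed by a careful case analysis matching each resulting adjacency pattern between $L_{i-1}$ and the cycle to one of the three structural propositions according to parity and connectivity; this bookkeeping is where the bulk of the technical work of the proof will reside.
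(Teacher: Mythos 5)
Your proposal contains a genuine gap, which you yourself flag near the end: the central inductive claim --- that each BFS layer $G[L_i]$ has no pivot-minor isomorphic to $C_{k-2}$ --- is never established, and the mechanism you outline for converting a hypothetical $C_{k-2}$ pivot-minor of $G[L_i]$ into a $C_k$ pivot-minor of $G$ does not go through as stated. The obstacle you identify is not mere bookkeeping. A pivot at an edge $uv$ with $u,v\in L_i$ complements adjacencies across all three parts of $(N_G(u)\cup N_G(v))-\{u,v\}$, and those parts straddle $L_{i-1}$, $L_i$, and $L_{i+1}$; so after applying the whole pivot sequence $\sigma$ to $G$, both the edges within $L_{i-1}$ and the edges between $L_{i-1}$ and the surviving cycle vertices in $L_i$ are altered in a way that depends on the entire sequence, not on what $\sigma$ did inside $L_i$. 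The ``two-vertex extension'' and the proposed ``Ramsey-type cleaning'' of $L_{i-1}$ are therefore placeholders for the actual argument, and without them the inductive step is unproved. Indeed, it is not at all clear that the claim ``$G[L_i]$ has no $C_{k-2}$ pivot-minor'' is even true for graphs with no $C_k$ pivot-minor, precisely because a pivot-minor of an induced subgraph of $G$ need not correspond to anything recoverable from a pivot-minor of $G$. (Your two base cases, $k=3$ via preservation of bipartiteness and $k=4$ via even-hole-freeness and the Addario-Berry--Chudnovsky--Havet--Reed--Seymour bound, are fine as far as they go.)

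For comparison, the paper does not prove Theorem~\ref{thm:cko} at all: it is cited verbatim from Choi, Kwon, and Oum~\cite{CKO2015}, where the explicit bound $\chi(G)\le 2(6k^3-26k^2+25k-1)^{\omega(G)-1}$ is obtained, and the paper additionally records a short modern derivation. Namely, by Proposition~\ref{prop:cycle1}, a graph with no pivot-minor isomorphic to $C_k$ has no induced cycle of length $m\ge k$ with $m\equiv k\pmod 2$, hence no induced cycle of length congruent to $k$ modulo $\ell$ with $\ell=2\lceil k/2\rceil$, and then Theorem~\ref{thm:ss} of Scott and Seymour~\cite{SS2019} gives $\chi$-boundedness directly. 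Neither of these routes proceeds by induction on $k$ or by BFS layering of $G$ itself, so your proposal is not a variant of the paper's argument but an independent (and at present incomplete) attempt.
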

They showed that $\chi(G)\le 2(6k^3-26k^2+25k-1)^{\omega(G)-1}$ holds for graphs $G$ having no pivot-minor isomorphic to $C_k$, far from being a polynomial. 
Theorem~\ref{thm:cko} is now implied by a recent theorem 
of Scott and Seymour~\cite{SS2019}, solving three conjectures of Gy\'arf\'as~\cite{Gyarfas1987} on $\chi$-boundedness all at once. 
\begin{theorem}[Scott and Seymour~\cite{SS2019}]\label{thm:ss}
    For all $k\ge 0$ and $\ell>0$, 
    the class of all graphs having no induced cycle of length $k$ modulo $\ell$ is $\chi$-bounded.
\end{theorem}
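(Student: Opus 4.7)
The plan is induction on $\omega=\omega(G)$, with the triangle-free case $\omega=2$ forming the combinatorial core. Fix $k$ and $\ell$ and let $\mathcal{F}$ denote the class of graphs with no induced cycle of length congruent to $k$ modulo $\ell$. When $\omega(G)=\omega\ge 3$, every neighborhood $G[N(v)]$ lies in $\mathcal{F}$ and has clique number at most $\omega-1$, so by induction $\chi(G[N(v)])\le f(\omega-1)$ for some function $f$ defined recursively. A standard neighborhood-extraction reduction then lets one focus on the situation where every neighborhood already has bounded chromatic number, which mirrors the triangle-free base case. So the main work is the following quantitative statement: if $G\in\mathcal{F}$ is triangle-free (or more generally has all neighborhoods of chromatic number at most a fixed constant $c$), then $\chi(G)$ is bounded in terms of $k$, $\ell$, and $c$.

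For that, I would fix a root $v_0$ and take its BFS-leveling $L_0,L_1,\dots$. Since $\chi(G)\le 2\max_i\chi(G[L_i])$, some level $L_d$ has large chromatic number; after passing to a connected induced subgraph $H\subseteq G[L_d]$ of comparable chromatic number and fixing, for every $x\in V(H)$, a canonical shortest path $P_x$ of length $d$ from $x$ to $v_0$, one turns induced paths $Q$ inside $H$ into induced cycles. Indeed, if the endpoints $x,y$ of $Q$ are chosen so that $P_x$ and $P_y$ meet only at $v_0$ and no vertex of $Q$ has extra neighbors on $P_x\cup P_y$, then $Q\cup P_x\cup P_y$ is an induced cycle of length $|E(Q)|+2d$. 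Producing many holes of controlled lengths thus reduces to producing long induced paths in $H$ with compatible canonical continuations; cleaning up $H$ to remove shortcuts between canonical paths either supplies such paths or concentrates chromatic number on a small vertex set, to which the inductive hypothesis can then be applied.

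The main obstacle, and the genuinely delicate step, is the modular length-control: a single run of the levelling construction naturally produces hole lengths lying in an arithmetic progression that need not contain the forbidden residue $k\bmod\ell$. The Scott--Seymour strategy is to iterate the levelling and build what they call a \emph{multi-broom} or \emph{chandelier}: a pair of vertices joined by $\ell$ internally disjoint induced paths whose lengths realize all $\ell$ residues modulo $\ell$. Any two of these paths close into an induced cycle, and hitting every residue guarantees an induced cycle of length $\equiv k\pmod{\ell}$, contradicting $G\in\mathcal{F}$. The bookkeeping needed to force all residues to appear simultaneously, while keeping the constituent paths internally disjoint and induced in $G$ and preventing the appearance of triangles (which would prematurely invoke the inductive hypothesis at smaller $\omega$), is the technical heart of the argument: it is carried out by a nested induction in which each level of recursion enlarges the set of residues attainable, terminating when the set equals $\mathbb{Z}/\ell\mathbb{Z}$. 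Once the chandelier is built, the contradiction, and hence the inductive step, is immediate.
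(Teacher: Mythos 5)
This statement is not proved in the paper at all: Theorem~\ref{thm:ss} is quoted from Scott and Seymour~\cite{SS2019} and used purely as a black box (to re-derive Theorem~\ref{thm:cko} by taking $\ell=2\lceil k/2\rceil$ and invoking Proposition~\ref{prop:cycle1}). So there is no internal proof to compare your attempt against; the only meaningful comparison is with the argument of~\cite{SS2019} itself, which is a long, self-contained paper in the Scott--Seymour series.

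Measured against that, your outline correctly names the architecture of their proof: induction on clique number, reduction to the case where every neighbourhood has bounded chromatic number, BFS levellings with $\chi(G)\le 2\max_i\chi(G[L_i])$, closing induced paths inside a level into holes via shortest paths to the root, and a gadget realizing many residues modulo $\ell$. But as a proof it has genuine gaps, and they sit exactly where the difficulty lives. First, arranging that $P_x\cup Q\cup P_y$ is induced (the paths meet only at $v_0$, and no vertex of $Q$ has extra neighbours on $P_x\cup P_y$) is not a clean-up step; it is a substantial part of the levelling machinery. Second, even granting induced cycles of length $\abs{E(Q)}+2d$, large chromatic number yields long induced paths but not paths of a \emph{prescribed} length, so there is no control over $\abs{E(Q)}$ modulo $\ell$; hitting a specific residue is precisely what forces the multibroom construction, whose existence you explicitly defer (``the bookkeeping \dots is the technical heart''). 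Third, the termination claim for the nested induction --- that the set of attainable residues strictly grows until it equals $\mathbb{Z}/\ell\mathbb{Z}$ --- is asserted rather than proved, and is essentially the main theorem of~\cite{SS2019} restated. There is also a small slip in the gadget itself: $\ell$ internally disjoint paths whose lengths realize all residues need not produce every residue as a pairwise sum (for $\ell=2$, lengths $0$ and $1$ give only odd cycles), so the structure must be set up with slightly more care. In short, the proposal is a reasonable road map of the Scott--Seymour argument but not a proof; in the context of this paper, the correct treatment of the statement is simply the citation.
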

To see why Theorem~\ref{thm:ss} implies Theorem~\ref{thm:cko}, take $\ell:=2\lceil k/2\rceil$ and apply Proposition~\ref{prop:cycle1}.
Still the bound obtained from Theorem~\ref{thm:ss} is far from being a polynomial. 

And yet no one was able to answer the following problem of Esperet.
\begin{problem}[Esperet; see~\cite{KM2016}]
    Is it true that every $\chi$-bounded class of graphs 
    polynomially $\chi$-bounded?
\end{problem}
Thus it is natural to pose the following conjecture.
\begin{conjecture}\label{con:polychi}
    For every graph $H$, the class of graphs with no pivot-minor isomorphic to $H$ is polynomially $\chi$-bounded.
\end{conjecture}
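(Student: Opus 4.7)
\noindent My plan is to attack Conjecture~\ref{con:polychi} first for the case $H = C_k$, since this is the setting in which $\chi$-boundedness is known directly (Theorem~\ref{thm:cko}) and where the tools developed in the present paper give substantial structural information. A positive answer for cycles might then be bootstrapped to general $H$ via a reduction, using that every $H$ is an induced subgraph of structures that one might try to build from cycles via pivots; but the main challenge lies already in the cycle case itself.

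For $H = C_k$, the aim is to improve the exponential bound $2(6k^3-26k^2+25k-1)^{\omega(G)-1}$ of Choi, Kwon, and Oum to a polynomial in $\omega(G)$. I would begin with a graph $G$ of large chromatic number and no pivot-minor isomorphic to $C_k$, and use Lemma~\ref{lem: tree} to obtain a dominating induced tree $T$ inside a connected component of large chromatic number. Propositions~\ref{prop:cycle1},~\ref{prop:antihole}, and~\ref{prop:fan} impose severe structural constraints around $T$: long induced paths in $T$ together with their neighbourhoods cannot support long induced cycles of the right parity, long anti-holes, or strongly $k$-good fans. From these constraints I would try to extract a layering of $V(G)$ into polynomially many layers, each inducing a subgraph of strictly smaller clique number than $G$, and then induct on $\omega$.

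The main obstacle is quantitative. Every application of R\"odl's theorem (Theorem~\ref{thm: rodl}) or of Lemma~\ref{lem:bounded} costs a multiplicative constant in the size of the set on which one is working, so a direct iteration yields a chromatic number bound that is exponential in the number of iterations. To reach a polynomial bound one must either replace R\"odl's theorem by a polynomial analogue---which would essentially resolve Esperet's problem inside the pivot-minor class and constitute a major advance---or reorganize the induction so that only $O(\log \omega)$ iterations suffice. A modest but informative first step would be to settle $H = C_4$ by exploiting the rigidity of the pivot operation on short cycles together with existing polynomial $\chi$-bounds for $P_t$-free graphs, and to compare against known lower bound constructions to see what polynomial degree is unavoidable; a positive answer there would give real evidence that Conjecture~\ref{con:polychi} is tractable beyond the $\chi$-boundedness already guaranteed by Theorem~\ref{thm:ss}.
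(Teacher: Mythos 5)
The statement you were asked about is Conjecture~\ref{con:polychi}, which the paper poses as an \emph{open problem}: the authors explicitly state that ``it is open whether Conjecture~\ref{con:polychi} holds when $H=C_k$,'' and they offer no proof of it for any $H$. So there is no proof in the paper to compare your attempt against, and your text---correctly---does not claim to be one; it is a research sketch. The honest thing to say is that the conjecture remains unproved and that what you have written is a plan of attack, not an argument.

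As a plan, your diagnosis of the obstruction is accurate and is exactly the issue the paper dances around in Section~\ref{sec:discussion}. R\"odl's theorem (Theorem~\ref{thm: rodl}) and its corollary are inherently lossy: each application shrinks the working set by a constant factor and yields no control over how those constants compound, so a naive iteration gives bounds that are exponential in $\omega$ (which is in fact what Choi, Kwon, and Oum obtain). Replacing this by a polynomial variant, or restructuring the recursion so that only $O(\log\omega)$ steps are needed, is precisely where the difficulty lies, and you should be aware that the first option is very close in spirit to Esperet's Problem, which the paper also records as open. Two further cautions: (i) your suggested reduction from general $H$ to cycles has no obvious mechanism---pivot-minor containment is not monotone under passing to induced subgraphs of $H$ in the way you would need, so this step requires a genuine idea, not just the observation that $H$ can be ``built from cycles''; and (ii) the paper's own route to progress for vertex-minors (Theorem~\ref{thm:bonamy} combined with the grid theorem of~\cite{GKMW2019}) does not transfer to odd-cycle pivot-minors because bipartite graphs have unbounded rank-width while excluding every odd $C_k$ as a pivot-minor, a point the paper makes explicitly. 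So any structural shortcut via bounded rank-width is closed off for odd $k$, and an approach through the fan and cycle propositions is forced to confront the quantitative issue head on.
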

It is open whether Conjecture~\ref{con:polychi} holds when $H=C_k$.
Conjecture~\ref{con:polychi} implies  not only  Conjectures~\ref{con:pivot},~\ref{con:strongeh} but also the following 
conjecture of Geelen (see~\cite{DK2012}) proposed 
in 2009 at the DIMACS workshop on graph colouring and structure held at Princeton University.
\begin{conjecture}[Geelen; see~\cite{DK2012}]\label{con:geelen}
    For every graph $H$, the class of graphs with no vertex-minor isomorphic to $H$ is $\chi$-bounded.
\end{conjecture}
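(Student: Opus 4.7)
The plan is to attack Geelen's conjecture by induction on $\abs{V(H)}$, loosely following the high-level strategy of the present paper: apply a R\"odl-type dichotomy (Theorem~\ref{thm: rodl}) to pass to a linear-sized induced subgraph of small maximum degree in $G$ or in $\overline{G}$, use Lemma~\ref{lem: tree} to extract a dominating induced tree $T$, and then via Lemmas~\ref{lem: tree path weight} and~\ref{lem: one direction} obtain either a long dominating induced path $P$ or a large anticomplete pair inside a connected subgraph $G'$ of linear size. The base case $\abs{V(H)}=1$ is trivial, and at the inductive step one fixes $H$ and seeks a function $f_H$ with $\chi(G)\le f_H(\omega(G))$ for every $G$ with no vertex-minor isomorphic to $H$. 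A subtle point even at the reduction stage is that vertex-minors do not interact well with graph complementation, so one must genuinely handle the two R\"odl cases separately rather than reducing one to the other.

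The heart of the argument would be a structural dichotomy for vertex-minor-free classes, to be applied in the long-path case. Concretely, one would aim for a vertex-minor analogue of the grid theorem: for every $H$ there is $r(H)$ such that every graph with no vertex-minor isomorphic to $H$ either has rank-width at most $r(H)$, or contains one of a short list of unbounded obstruction structures (long induced paths together with many well-attached external vertices arranged as large fans, bicliques, or half-graphs) from which $H$ can be built by a bounded sequence of local complementations and vertex deletions. In the bounded-rank-width case, one invokes the known linear $\chi$-boundedness of classes of bounded rank-width to finish. In the unbounded-obstruction case, a careful attachment analysis on $P$ (refining the partition into the sets $A_i,B_i,C_i^j,D_i^j$ used in the proof of Theorem~\ref{thm: Strong EH}) produces $H$ as a vertex-minor, completing the induction.

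The main obstacle is precisely this structural dichotomy, which is currently open and is what has blocked progress on Geelen's conjecture. A single local complementation can change the chromatic number drastically, so one cannot simply pass to a vertex-minor of $G$ and apply induction to $\chi$ directly; and unlike in the minor case, there is no Robertson--Seymour-style structure theorem for proper vertex-minor-closed classes, with even the sub-case of circle graphs being nontrivial. A secondary obstacle is that the attachment-pattern analysis in the present paper uses the parity of cycle lengths in an essential way (see~\eqref{eq: B parity} and Claim~\ref{cl:parity}), and there is no obvious replacement when $H$ is arbitrary; so on top of the dichotomy, one likely needs a new combinatorial tool to realize an arbitrary small graph as a vertex-minor of a long induced path equipped with controlled external attachments. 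For this reason I would expect that any serious attack on the conjecture would proceed by first establishing a vertex-minor grid theorem, and only then returning to the $\chi$-boundedness question as a consequence.
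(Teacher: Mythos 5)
This statement is an open conjecture (attributed to Geelen), and the paper itself offers no proof of it; it is stated only to situate the paper's results among known implications. Your submission is likewise not a proof: it is a research programme whose central step --- a structural dichotomy asserting that every graph with no vertex-minor isomorphic to $H$ either has bounded rank-width or contains one of a bounded list of ``obstruction'' configurations from which $H$ can be recovered --- is, as you yourself acknowledge, open and is precisely what has blocked progress on the conjecture. A plan that defers its only nontrivial step to an unproved structure theorem does not constitute a proof, so the gap here is total rather than local.

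A few further cautions about the sketch itself. The cited grid theorem of Geelen, Kwon, McCarty, and Wollan applies only when $H$ is a \emph{circle} graph; for non-circle $H$ no bounded-rank-width conclusion is possible (circle graphs themselves exclude some vertex-minor yet have unbounded rank-width), so the hypothesized dichotomy would have to take a genuinely different and currently unknown form. The R\"odl/dominating-tree/long-path machinery of this paper is tailored to producing \emph{pure pairs} and hence to the strong Erd\H{o}s--Hajnal property; it does not by itself yield $\chi$-boundedness, and indeed the paper is careful to note that even polynomial $\chi$-boundedness for the special case $H=C_k$ (pivot-minors or vertex-minors) requires entirely different arguments. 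Finally, the parity-based attachment analysis (the sets $A_i,B_i,C_i^j,D_i^j$ and Claims such as~\ref{cl:parity}) is specific to realizing a \emph{cycle} of prescribed length and parity; there is no indication it generalizes to building an arbitrary $H$. Your concluding assessment --- that one would first need a vertex-minor structure theorem --- is a reasonable research opinion, but it confirms that no proof has been given.
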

Of course it is natural to pose the following conjecture, weaker than Conjecture~\ref{con:polychi} but stronger than Conjecture~\ref{con:geelen}.
\begin{conjecture}[Kim, Kwon, Oum, and Sivaraman~\cite{KKOS2018}]\label{con:polychivertexminor}
    For every graph $H$, the class of graphs with no vertex-minor isomorphic to $H$ is polynomially $\chi$-bounded.
\end{conjecture}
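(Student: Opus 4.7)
The conjecture is open in full generality; even its weaker non-polynomial form (Conjecture~\ref{con:geelen}) remains unresolved. Any proposal is therefore speculative, and I outline one plausible line of attack building on the framework of this paper.

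\textbf{Step 1 (reduction to pivot-minors).} I would first reduce Conjecture~\ref{con:polychivertexminor} to Conjecture~\ref{con:polychi}. The orbit of $H$ under local complementation on a fixed vertex set of size $|V(H)|$ is finite (at most $2^{\binom{|V(H)|}{2}}$ graphs on that vertex set), so there is a finite family $\mathcal{F}_H$ of graphs with the property that $G$ contains $H$ as a vertex-minor if and only if $G$ contains some member of $\mathcal{F}_H$ as a pivot-minor. A polynomial $\chi$-bound for each class of graphs with no pivot-minor isomorphic to some $H' \in \mathcal{F}_H$ then transfers, by intersection, to the vertex-minor-free class.

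\textbf{Step 2 (polynomial $\chi$ from strong Erd\H{o}s--Hajnal).} Assuming the strong Erd\H{o}s--Hajnal property of Conjecture~\ref{con:strongeh} for each $H' \in \mathcal{F}_H$, the idea is to iteratively extract pure pairs $(A, B)$ of linear size. A complete pure pair forces $\omega(G)$ to split additively, so one of $\omega(G[A])$, $\omega(G[B])$ is strictly smaller than $\omega(G)$; an anti-complete pure pair allows both sides to share a color palette. Setting $f(k)$ to be the conjectured polynomial bound and using the superadditivity $f(a) + f(b) \le f(a + b)$, induction on $\omega(G)$ together with a careful accounting of the vertices outside $A \cup B$ should yield a polynomial bound on $\chi(G)$ in terms of $\omega(G)$.

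\textbf{Main obstacle.} Two substantial hurdles stand in the way. First, Conjecture~\ref{con:strongeh} is established here only for $H = C_k$, and its extension to arbitrary $H$ is itself a major open problem. Second, a pure pair $(A, B)$ of linear size need not cover $V(G)$, and the chromatic contribution of $V(G) \setminus (A \cup B)$ is hard to control via pure pairs alone; the fact that Esperet's problem (whether every $\chi$-bounded class is polynomially so) is open means there is no purely abstract reduction from Conjecture~\ref{con:strongeh} to Conjecture~\ref{con:polychivertexminor}. Genuine progress likely requires a structural insight specific to vertex-minor-closed classes---for instance via bounded rank-width decompositions or the $\mathrm{GF}(2)$-matroidal interpretation of pivoting and local complementation.
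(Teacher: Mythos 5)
This statement is an open conjecture, not a theorem: the paper records it (attributing it to Kim, Kwon, Oum, and Sivaraman) without proof, and it is currently known only for special targets $H$ (for instance long cycles, and more generally circle graphs, via bounded rank-width). So there is no proof in the paper to compare against, and you are right to present only a speculative outline. Two concrete corrections to that outline are worth making. First, the reduction in Step~1 is both overcomplicated and, as stated, false: it is not true that $G$ contains $H$ as a vertex-minor if and only if $G$ contains some graph locally equivalent to $H$ as a pivot-minor. For example, $K_3$ has $\overline{K_2}$ as a vertex-minor (locally complement at a vertex and delete it), but every two-vertex pivot-minor of $K_3$ is $K_2$, and local complementation fixes all two-vertex graphs. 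Fortunately the direction you actually need is immediate and requires no orbit $\mathcal{F}_H$: every pivot-minor is a vertex-minor, so the class of graphs with no vertex-minor isomorphic to $H$ is a hereditary subclass of the class with no pivot-minor isomorphic to $H$, and polynomial $\chi$-boundedness is inherited by hereditary subclasses. This is precisely why the paper observes that Conjecture~\ref{con:polychi} implies Conjecture~\ref{con:polychivertexminor}.

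Second, Step~2 cannot succeed even in principle as an abstract implication: as the paper notes (citing the Scott--Seymour survey), the strong Erd\H{o}s--Hajnal property does \emph{not} imply polynomial $\chi$-boundedness for general hereditary classes, so no inductive bookkeeping of linear-size pure pairs alone can close the gap; your ``main obstacle'' paragraph essentially concedes this. The only currently known route to polynomial $\chi$-boundedness in this setting is structural: when $H$ is a circle graph, excluding $H$ as a vertex-minor bounds the rank-width~\cite{GKMW2019}, and bounded rank-width gives polynomial $\chi$-boundedness~\cite{BP2019}. For $H$ that is not a circle graph, no analogous structure theorem is available, and the conjecture remains wide open.
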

For vertex-minors, more results are known.
Kim, Kwon, Oum, and Sivaraman~\cite{KKOS2018} proved that for each $k\ge 3$,
the class of graphs with no vertex-minor isomorphic to $C_k$ is polynomially $\chi$-bounded.
Their theorem is now implied by the following two recent theorems.
To describe these theorems, we first have to introduce a few terms.
A \emph{circle graph} is the intersection graph of chords in a circle. 
In particular, $C_k$ is a circle graph.
The \emph{rank-width} of a graph is one of the width parameters of graphs, measuring how easy it is to decompose a graph into a tree-like structure while keeping every cut to have a small `rank'. Rank-width was introduced by Oum and Seymour~\cite{OS2004}.
We will omit the definition of the rank-width. 
\begin{theorem}[Geelen, Kwon, McCarty, and Wollan~\cite{GKMW2019}]
    For each circle graph $H$, there is an integer $r(H)$ such that every graph 
    with no vertex-minor isomorphic to $H$ has rank-width at most $r(H)$.
\end{theorem}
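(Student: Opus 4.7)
The plan is to prove the contrapositive by means of a vertex-minor analogue of the Robertson--Seymour grid theorem: if $G$ has rank-width sufficiently large in terms of $|V(H)|$, then $G$ contains $H$ as a vertex-minor. The argument splits into two parts: (a) a single universal family of graphs, large members of which are unavoidable as vertex-minors in graphs of high rank-width, and (b) the fact that every circle graph embeds as a vertex-minor into a sufficiently large member of this family.

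For part (b), I would use the \emph{comparability grid} $\Gamma_n$ on vertex set $[n]\times[n]$, with $(i,j)$ adjacent to $(i',j')$ if and only if $(i-i')(j-j')>0$. First one verifies that $\Gamma_n$ is itself a circle graph by exhibiting a concrete chord-diagram realization. Then, using the chord-diagram calculus for circle graphs (in which local complementation corresponds to a natural twist operation on chords and pivoting corresponds to swapping endpoints), one shows that every circle graph $H$ on $k$ vertices is isomorphic to a vertex-minor of $\Gamma_{f(k)}$ for some explicit function $f$: realize $H$ as $k$ chords, and simulate each of these chords inside the dense, highly regular chord diagram of $\Gamma_{f(k)}$ through a sequence of local complementations followed by deletion of the unused chords.

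The main obstacle, and the heart of the theorem, is the \emph{grid theorem for rank-width}: a function $g$ such that every graph of rank-width at least $g(n)$ contains $\Gamma_n$ as a vertex-minor. Here I would exploit that rank-width is governed by the submodular cut-rank function $\rho_G(X):=\mathrm{rank}_{\mathbb{F}_2}(A_G[X,V(G)-X])$, which is the matroidal connectivity of the binary matroid naturally associated to $G$. High rank-width yields a tangle of high order in $\rho_G$; by Tutte-style linkage arguments for connectivity functions together with an analogue of Menger's theorem for cut-rank, one extracts from this tangle a highly connected ``skeleton'' of vertex subsets whose pairwise cut-ranks match those of $\Gamma_n$. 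The delicate step is then converting this abstract cut-rank skeleton into an honest vertex-minor isomorphic to $\Gamma_n$, which is done by a sequence of local complementations and pivots carefully tracked through the tangle refinement, using that vertex-minors preserve the underlying binary matroid up to a controlled transformation.

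Setting $r(H):=g(f(k))$ with $k=|V(H)|$ closes the argument: any graph of rank-width exceeding $r(H)$ contains $\Gamma_{f(k)}$ as a vertex-minor, and hence $H$ as a vertex-minor, contradicting the hypothesis. The conceptually hard piece is the vertex-minor grid theorem; the embedding of $H$ into $\Gamma_{f(k)}$ is by comparison a routine chord-diagram exercise, and the final compilation is merely composing the two functions $f$ and $g$.
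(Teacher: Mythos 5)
This statement is quoted in the paper from Geelen, Kwon, McCarty, and Wollan \cite{GKMW2019}; the paper gives no proof of it, so there is nothing internal to compare against. Your high-level decomposition does match the actual strategy of \cite{GKMW2019}: they too work with the comparability grid $\Gamma_n$ (or an equivalent canonical circle graph), prove that every circle graph on $k$ vertices is a vertex-minor of $\Gamma_{f(k)}$ via the chord-diagram calculus, and reduce the theorem to a ``grid theorem'' asserting that sufficiently large rank-width forces a $\Gamma_n$ vertex-minor. Part (b) of your outline is essentially correct and is indeed the routine half.

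The genuine gap is in part (a), which is the entire content of the theorem. Saying that high rank-width yields a tangle of high order in the cut-rank function $\rho_G$ is standard (it follows from submodularity of $\rho_G$ and the duality between branch-decompositions and tangles), and Tutte's linking theorem does give a Menger-type statement for connectivity functions. But neither of these produces a $\Gamma_n$ vertex-minor: a tangle plus pairwise linkedness of some family of sets gives you control over cut-ranks between abstract parts of the graph, not an actual induced structure obtainable by local complementations and deletions. The sentence ``converting this abstract cut-rank skeleton into an honest vertex-minor isomorphic to $\Gamma_n$\dots by a sequence of local complementations and pivots carefully tracked through the tangle refinement'' is precisely the theorem restated, not an argument; there is no known soft mechanism by which a connectivity-function skeleton compiles into a prescribed vertex-minor, and the published proof occupies a long paper built on quite different intermediate objects (delta-matroid/isotropic-system representations, long ``independent'' sequences of vertices extracted from a linked branch-decomposition, and heavy Ramsey-type cleaning to organize them into a comparability grid). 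As written, your proposal proves the reduction from $H$ to $\Gamma_{f(k)}$ and then assumes the grid theorem it set out to prove.
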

\begin{theorem}[Bonamy and Pilipczuk~\cite{BP2019}]\label{thm:bonamy}
    For each $k$, the class of graphs of rank-width at most $k$ is polynomially $\chi$-bounded.
\end{theorem}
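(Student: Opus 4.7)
The plan is to induct on $|V(G)|$ with the aid of a rank decomposition $(T,\mathcal{L})$ of $G$ of width at most $k$, exploiting the fact that every edge of $T$ induces a partition $V(G)=A\sqcup B$ whose bipartite adjacency matrix between $A$ and $B$ has $\mathrm{GF}(2)$-rank at most $k$. In particular, the vertices of $A$ split into at most $2^k$ \emph{trace classes} according to their neighbourhoods in $B$, and symmetrically for $B$; this bounded number of neighbourhood types across every cut is the structural feature I would exploit throughout.

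First I would pick a balanced edge of $T$, giving $V(G)=A\sqcup B$ with $|A|,|B|\ge |V(G)|/3$, and refine both sides into their trace classes $A_1,\dots,A_s$ and $B_1,\dots,B_t$ with $s,t\le 2^k$. For any pair $(A_i,B_j)$ the induced bipartite graph $G[A_i,B_j]$ is either complete or edgeless, so once the $A_i$'s and $B_j$'s are coloured individually, the cross-edges can be dealt with by a bipartite-style recolouring that uses only an additional $q_k(\omega(G))$ colours, where $q_k$ is polynomial in $\omega(G)$ with constants depending on $k$. The point is that each induced subgraph $G[A_i]$ inherits rank-width at most $k$ and has strictly fewer vertices than $G$, so that the inductive hypothesis applies to it.

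The induction then closes via a recurrence of the shape
\[
    \chi(G)\le\max_i\chi(G[A_i])+q_k(\omega(G)),
\]
where the maximum ranges over trace classes on either side. The inductive hypothesis yields $\chi(G[A_i])\le P_k(\omega(G[A_i]))\le P_k(\omega(G))$, and because the overhead $q_k(\omega(G))$ is \emph{additive} rather than multiplicative, one obtains $\chi(G)\le P_k(\omega(G))$ for a polynomial $P_k$ independent of $|V(G)|$.

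The main obstacle is exactly to show that this overhead $q_k(\omega(G))$ is genuinely polynomial in $\omega(G)$: a naive approach that splits $G$ as $G[A]\cup G[B]$ and sums the two chromatic numbers would double at each of $\Theta(\log|V(G)|)$ recursion levels, producing a dependence on $|V(G)|$ rather than on $\omega(G)$. Overcoming this requires genuinely using the bounded-rank bipartite structure to \emph{reuse} colours across the cut, for instance by precolouring one side via the inductive hypothesis and then, for each of the $2^k$ trace classes on the other side, selecting a permutation of the palette that avoids conflicts with complete-bipartite cross-edges. Producing such a palette-reuse scheme — and verifying that the palette size needed is a polynomial in $\omega(G)$ via an entropy- or potential-based accounting over the at most $2^{2k}$ bipartite pieces — is, I expect, the technical crux of the argument.
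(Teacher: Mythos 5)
This statement is not proved in the paper at all: it is quoted as a black-box citation to Bonamy and Pilipczuk, whose proof occupies a full research article. Your proposal is therefore an attempt to reprove that external result, and as it stands it has a genuine gap — in fact two.

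First, the recurrence $\chi(G)\le\max_i\chi(G[A_i])+q_k(\omega(G))$ is not justified and, as written, is false as a step. The trace classes $A_1,\dots,A_s$ partition $A$, but the bounded $\mathrm{GF}(2)$-rank of the cut $(A,B)$ says nothing whatsoever about the edges of $G$ joining $A_i$ to $A_{i'}$ for $i\ne i'$; those edges live entirely inside $A$ and are invisible to the cut. So colouring each $G[A_i]$ separately and taking the maximum does not produce a proper colouring of $G[A]$. To repair this you must either pay $\sum_i\chi(G[A_i])$, i.e.\ a multiplicative factor of up to $2^k$ at every level of a recursion of depth $\Theta(\log|V(G)|)$, or recurse on $G[A]$ and $G[B]$ as wholes — which is exactly the doubling you correctly identify as fatal. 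Second, the palette-reuse scheme that you defer to ("selecting a permutation of the palette that avoids conflicts") cannot work in the naive form you describe: a single trace class $B_j$ may be complete to several trace classes of $A$ whose vertices jointly use \emph{every} colour appearing on $A$, in which case no permutation of $B_j$'s palette avoids all conflicts and genuinely new colours are forced. Controlling how many new colours are forced is precisely where one must exploit that a complete pair $(A_i,B_j)$ implies $\omega(G)\ge\omega(G[A_i])+\omega(G[B_j])$, and turning that observation into an additive polynomial overhead requires an induction on $\omega$ with a carefully designed potential — this is the substantive content of the Bonamy–Pilipczuk argument, not a routine verification. Since you explicitly flag this step as something you "expect" to work rather than something you have established, the proposal does not constitute a proof; it reduces the theorem to its own hardest part.
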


\begin{figure}
    \centering
    \begin{tikzcd}[column sep=small]
        & \text{Erd\H{o}s-Hajnal}\\
        \text{$\chi$-bounded}
        & \text{strong Erd\H{o}s-Hajnal}
        \arrow[u]\\
        \text{polynomially $\chi$-bounded}
        \arrow[u]\arrow[ru]\\
        \text{bounded rank-width}
        \arrow[u,"\text{\cite{BP2019}}"]
        \\
        && \text{no $C_k$-pivot-minors}
        \arrow[uuul]
        \arrow[uuull,"\text{\cite{CKO2015}}"]\\
        \text{no $C_k$-vertex-minors}
        \arrow[uu,"\text{\cite{GKMW2019}}"]\arrow[rru]        
    \end{tikzcd}
    \caption{Known implications between properties of classes of graphs.}\label{fig:relation}
\end{figure}
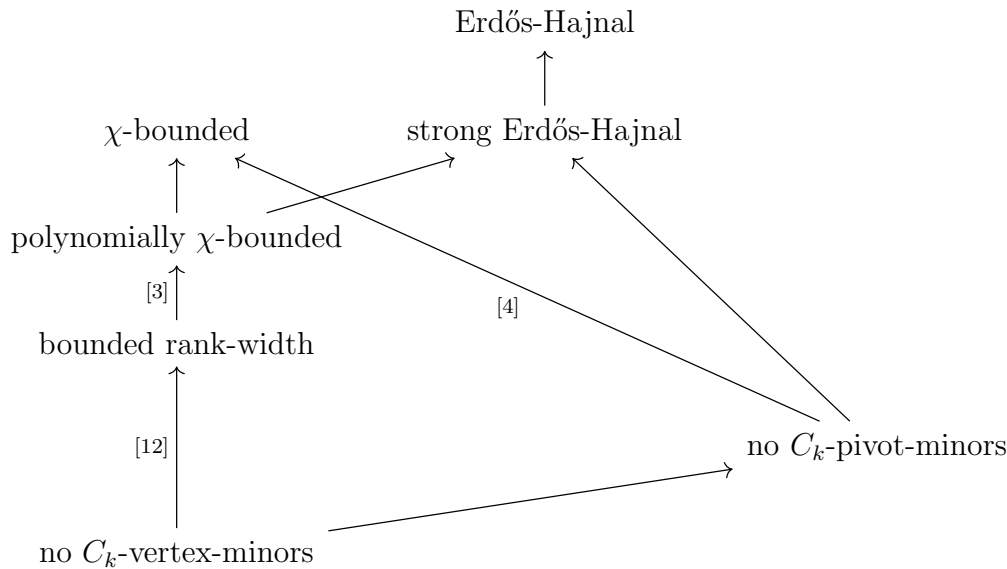

As noted in~\cite{CO2018}, it is easy to prove directly that the class of graphs of bounded rank-width has the strong Erd\H{o}s-Hajnal property, without using Theorem~\ref{thm:bonamy}.
See Figure~\ref{fig:relation} for a diagram showing the containment relations between these properties.

So, one may wonder whether the class of graphs with no pivot-minor isomorphic to $C_k$ has bounded rank-width.
Unfortunately, if $k$ is odd, then it is not true, 
because all bipartite graphs 
have no pivot-minor isomorphic to $C_k$ for odd $k$
and yet have unbounded rank-width, see~\cite{Oum2004}.
If $k$ is even, then it would be true if the following conjecture hold.
\begin{conjecture}[Oum~\cite{Oum2006a}]
    For every bipartite circle graph $H$, 
    there is an integer $r(H)$ such that every graph 
    with no pivot-minor isomorphic to $H$ has 
    rank-width at most $r(H)$.
\end{conjecture}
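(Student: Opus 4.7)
The plan is to attack this conjecture by lifting the Geelen--Kwon--McCarty--Wollan theorem on vertex-minors \cite{GKMW2019} to the pivot-minor setting, exploiting the bipartiteness of $H$ in an essential way. Since every bipartite circle graph is, in particular, a circle graph, their theorem already furnishes an integer $r'(H)$ such that any graph with rank-width greater than $r'(H)$ contains $H$ as a vertex-minor; what remains is to promote this vertex-minor to a pivot-minor under the additional hypothesis that $H$ is bipartite. The overall strategy is therefore: first dispose of the conjecture when the host graph $G$ is already bipartite, and then reduce the general case to the bipartite one.

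A natural first step is the bipartite case itself. For a bipartite graph $G$ with bipartition $(X,Y)$, the bipartite adjacency matrix $A \in \mathbb{F}_2^{X \times Y}$ represents a binary matroid $M(G)$, and this correspondence is well known to identify pivot-minors of $G$ with matroid minors of $M(G)$ and to match the rank-width of $G$ with the branch-width of $M(G)$ up to an additive constant. Under the fundamental-graph construction, bipartite circle graphs correspond precisely to graphic matroids of planar graphs, so the bipartite case of the conjecture becomes the matroid-theoretic statement that excluding a fixed planar graphic matroid as a minor from a binary matroid bounds its branch-width. I would approach this matroid statement through the Geelen--Gerards--Whittle structural theory for binary matroids, combined with the Robertson--Seymour grid theorem on the graphic side.

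The main obstacle, and the reason the conjecture has remained open, is the reduction from an arbitrary host graph $G$ to the bipartite setting. A natural attempt is to associate with $G$ a bipartite companion $G^b$ (for instance, a bipartite double cover, a suitable subdivision, or a bipartite pivot-minor extracted by a carefully chosen sequence of pivots) so that (i) the rank-width of $G^b$ is comparable to that of $G$, and (ii) any pivot-minor isomorphic to $H$ produced inside $G^b$ can be pulled back to a pivot-minor of $G$. Both requirements are delicate: local complementations at non-bipartition vertices destroy the bipartite structure, pivot-minors do not pass cleanly through the standard bipartitization operations, and the converse direction (pulling $H$ from $G^b$ back to $G$) typically requires controlling exactly which pivots inside $G^b$ correspond to pivots inside $G$. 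I would expect that the right notion is neither the double cover nor the subdivision, but a new bipartite construction tailored to the pivot-minor relation.

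An alternative, less modular route is to work directly inside a graph $G$ of large rank-width with the grid-like vertex-minor structure produced by the proof of \cite{GKMW2019}, and to prove a \emph{bipartite pivot-minor universality} lemma: once this vertex-minor structure is sufficiently large, every bipartite circle graph of bounded size can be realized as a pivot-minor using only pivots, with no isolated local complementation. This would be a uniform, bipartite-target analog of the ad hoc constructions in Section~\ref{sec:cycle} (Propositions~\ref{prop:cycle1},~\ref{prop:antihole}, and~\ref{prop:fan}) that produce $C_k$ as a pivot-minor from long cycles, long anti-holes, and strongly $k$-good fans. The heart of the matter would be to show that the canonical ``rank-width witnesses''---comparability grids, anti-matching grids, and similar obstructions---contain every bipartite circle graph as a pivot-minor; this universality step, and not the invocation of \cite{GKMW2019} or of matroid theory, is where I expect the main difficulty to lie.
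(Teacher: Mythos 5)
This statement is not a theorem of the paper but an open conjecture (attributed to Oum~\cite{Oum2006a}) recorded in the discussion section; the paper offers no proof of it, and your text does not supply one either. What you have written is a research programme, not an argument: every step that would actually establish the conjecture is flagged by you as unresolved. Concretely, your plan has two branches, and in each the load-bearing lemma is missing. In the reduction branch, the entire content of the conjecture sits in the passage where you hope for ``a new bipartite construction tailored to the pivot-minor relation'' satisfying (i) and (ii); you correctly observe that double covers and subdivisions fail, but you do not exhibit any construction that works, so nothing is proved. In the direct branch, the ``bipartite pivot-minor universality'' lemma --- that the canonical rank-width obstructions contain every bounded-size bipartite circle graph as a pivot-minor --- is precisely the open problem restated; note that the analogous universality statement is simply false without the bipartiteness hypothesis (as the paper points out, bipartite graphs of unbounded rank-width contain no $C_k$ pivot-minor for odd $k$), so there is no soft reason to expect vertex-minor containment to upgrade to pivot-minor containment, and you give no mechanism for the upgrade.

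The one part of your outline that is essentially sound is the bipartite host case: via the fundamental-graph correspondence, pivot-minors of a bipartite graph match binary matroid minors, rank-width matches branch-width up to an additive constant, and bipartite circle graphs are exactly fundamental graphs of planar graphic matroids (de Fraysseix), so that case follows from the Geelen--Gerards--Whittle grid theorem for $\mathrm{GF}(q)$-representable matroids --- a single known theorem, with no need for the full structure theory or for Robertson--Seymour on the graphic side. But this known special case is already the content of~\cite{Oum2006a}-adjacent work and is not where the difficulty lies; the conjecture remains open exactly because the non-bipartite host case resists both of your proposed reductions. As it stands, the proposal cannot be accepted as a proof.
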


\subsection*{Note.} 
Chudnovsky, Scott, Seymour, and Spirkl~\cite{CSSS2018} 
proved that for every graph $H$, 
the class of graphs $G$ such that neither $G$ nor $\overline G$ has any subdivision of $H$ as an induced subgraph
has the strong Erd\H{o}s-Hajnal property.
This implies that
when $k$ is even, the class of graphs with no induced even hole of length at least $k$
and no induced even anti-hole of length at least $k$
has the strong Erd\H{o}s-Hajnal property.
This is because every subdivision of a large theta graph%
\footnote{A \emph{theta graph} is a graph consisting of three internally disjoint paths of length at least $1$ joining two fixed vertices.}  contains a large even hole.
This implies Theorem~\ref{thm:eh} for even $k$ but not for odd $k$
by Propositions~\ref{prop:cycle1} and \ref{prop:antihole}.
The authors would like to thank the authors of \cite{CSSS2018} to share this observation.

\subsection*{Acknowledgement}
The authors would like to thank anonymous reviewers for their careful reading and helpful suggestions. 

\providecommand{\bysame}{\leavevmode\hbox to3em{\hrulefill}\thinspace}
\providecommand{\MR}{\relax\ifhmode\unskip\space\fi MR }
\providecommand{\arxiv}{arXiv:}
\providecommand{\MRhref}[2]{%
  \href{http://www.ams.org/mathscinet-getitem?mr=#1}{#2}
}
\providecommand{\href}[2]{#2}


%

%
%
%
%
%
%
%

%

\end{document}